\theoremstyle{plain} 
\newtheorem{thm}{Theorem}[section]
\newtheorem{prop}[thm]{Proposition}
\newtheorem{lemma}[thm]{Lemma}
\newtheorem{cor}[thm]{Corollary} 
\newtheorem{question}[thm]{Question}
\theoremstyle{remark}
\newtheorem{remark}[thm]{Remark}
\newtheorem{example}[thm]{Example}
\newtheorem{notation}[thm]{Notation}
\theoremstyle{definition}
\newtheorem{defin}[thm]{Definition}
\newcommand{\vF}{{ \mathbb F }}
\newcommand{\vP}{{ \mathbb P }}
\DeclareMathOperator{\Char}{char}
\begin{document}

\title{Square patterns in dynamical orbits}
\date{}
\subjclass[2010]{11T55, 37P25, 12E05, 20E08}
\author[Goksel]{Vefa Goksel}
\author[Micheli]{Giacomo Micheli}
\address{Towson University \\ Towson, MD 21252 \\ USA}
\email{vgoksel@towson.edu}
\address{University of South Florida\\ Tampa, FL 33620\\ USA}
\email{gmicheli@usf.edu}

\maketitle

\begin{abstract}
Let $q$ be an odd prime power. Let $f\in\vF_q[x]$ be a polynomial having degree at least $2$, $a\in \vF_q$, and denote by $f^n$ the $n$-th iteration of $f$. Let $\chi$ be the quadratic character of $\vF_q$, and $\mathcal{O}_f(a)$ the forward orbit of $a$ under iteration by $f$. Suppose that the sequence $(\chi(f^n(a)))_{n\geq 1}$ is periodic, and $m$ is its period. Assuming a mild and generic condition on $f$, we show that, up to a constant, $m$ can be bounded from below by $|\mathcal{O}_f(a)|/q^\frac{2\log_{2}(d)+1}{2\log_2(d)+2}$. More informally, we prove that the period of the appearance of squares in an orbit of an element provides an upper bound for the size of the orbit itself. Using a similar method, we can also prove that, up to a constant, we cannot have more than $q^\frac{2\log_2(d)+1}{2\log_2(d)+2}$ consecutive squares or non-squares in the forward orbit of $a$. In addition, we provide a classification of all polynomials for which our generic condition does not hold. 

\end{abstract}

\section{Introduction}
\label{sec:Intro}
Let $K$ be a field, and $f\in K[x]$ a polynomial with $d:=\deg(f)\geq 2$. We denote by $f^n$ the iterates of $f$ by self-composition, where $f^0 = x$, and $f^n:=f\circ f^{n-1}$ for $n\geq 1$. One fundamental object in dynamics is \emph{forward orbit} of an element $a\in K$, which is given by the set
\[\mathcal{O}_f(a) = \{a, f(a), f^2(a), \dots\}.\]
When this set is infinite, $a$ is called \emph{wandering point} of $f$. When this set is finite, $a$ is called a \emph{preperiodic point} of $f$. The latter case comes in two flavors: $a$ is called \emph{periodic} if $f^n(a) = a$ for some $n\geq 1$, and the smallest such positive integer $n$ is called the \emph{exact period} of $a$. If $f^{m+n}(a) = f^m(a)$ for some smallest non-negative integers $m,n$, then $a$ is called a \emph{preperiodic point of period $n$ and tail size $m$.}\par
There has been a lot of work on the orbit of an element $a\in\mathbb F_q$ under a polynomial map. See, for instance, \cite{AkbaryGhioca08,BGHKST13,Chang et al18,Heath-Brown,Hindes23,JMT16,ostafe2010length,Shao15,Sil08} for a limited list. See also \cite{Benedetto19} for a general overview of the dynamics over finite fields. This subject attracts researchers not only because of its theoretical interest, but also because of its applications to Pollard rho algorithm for factoring \cite{Bach91}.\par
In this paper, we have two main results on dynamical orbits. They concern dynamical orbits over finite fields and the occurrence of squares in dynamical orbits. In particular, we are able to bound the size of a dynamical orbit in terms of the period of its square elements, and also to prove that the number of consecutive squares in a dynamical orbit cannot be large.

Note that in the special scenario that the entire orbit consists of non-square elements and the polynomial is stable (i.e. all of its iterates are irreducible), a version of this problem was studied in \cite{ostafe2010length}.


Our main results concerning dynamical orbits over finite fields are the following.

\begin{thm}
\label{thm:orbitsize_explicit}
Let $q=p^k$ for some odd prime $p$ and $k\geq 1$, and let $d:=\deg(f)\geq 2$. Suppose that $f\in \mathbb{F}_q[x]$ is \textbf{not} in one of the following forms.
\begin{enumerate}[(a)]
\item $f=A(x-B)^{p^e}$ for some $A,B\in \vF_q$, $e\geq 1$.
\item $f = Ag^2$ for some $A\in \vF_q$, $g\in \vF_q[x]$.
\item $f = Axg^2$ for some $A\in \vF_q$, $g\in \vF_q[x]$.
\item $f=Ah^2+B$, where $A, B\in \vF_q$, and $h=\sum_{i=0}^{n}a_ix^i$ satisfies
\[a_0=\pm \sqrt{-\frac{B}{A}},\text{ } i(2i-1)Ba_i = -2(n+i-1)(n-i+1)a_{i-1}\] for $i\in \{1,2,\dots, n\}$.
\item $f=A(x-B)g^2$, where $A, B\in \vF_q$, and $g=\sum_{i=0}^{n}a_ix^i$ satisfies
\[\pm a_0 = \sqrt{-\frac{1}{A}}, \text{ }i(2i-1)Ba_i = -2(n-i+1)(n+i)a_{i-1}\]
for $i=1,2,\dots, n$.
\end{enumerate}
Let $a\in \mathbb{F}_q$. Suppose that the sequence $(\chi(f^n(a)))_{n\geq 0}$ is periodic, and let $m:=m_a$ be its period. Then
\[|\mathcal{O}_f(a)|=O\left(mq^\frac{2\log(d)+1}{2\log(d)+2}\right),\]
and the implied constant is only dependent on $d$.
\end{thm}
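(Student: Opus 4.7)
The plan is a character-sum argument. Write $\epsilon_k := \chi(f^k(a))$ for the prescribed periodic pattern of period $m$, and, for a parameter $N$ to be chosen later, count the points $y \in \vF_q$ whose iterated character values $\chi(f^j(y))$ agree with the pattern for $j = 0, 1, \ldots, N-1$. The desired bound on $|\mathcal{O}_f(a)|$ will follow by decomposing the orbit as $\mathcal{O}_f(a) = \bigcup_{k=0}^{m-1} \mathcal{O}_{f^m}(f^k(a))$: periodicity forces every $y \in \mathcal{O}_{f^m}(f^k(a))$ to satisfy $\chi(f^j(y)) = \epsilon_{(k+j) \bmod m}$ for all $j \geq 0$, so for any $N \geq 1$,
\[
|\mathcal{O}_{f^m}(f^k(a))| \leq |T_{k,N}|, \qquad T_{k,N} := \{y \in \vF_q : \chi(f^j(y)) = \epsilon_{(k+j) \bmod m},\ 0 \leq j < N\}.
\]
If some $\epsilon_j$ vanishes then $T_{k,N}$ is contained in the zero set of $f^j$, giving only $O(d^N)$ points, a case that is absorbed into the final bound; so one may assume all $\epsilon_j \in \{\pm 1\}$.

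Next, expand the indicator of $T_{k,N}$ using $\mathbf{1}[\chi(f^j(y)) = \epsilon_j] \leq (1 + \epsilon_j \chi(f^j(y)))/2$ (with equality when $\chi(f^j(y)) \neq 0$), which yields
\[
|T_{k,N}| \leq \frac{1}{2^N} \sum_{S \subseteq \{0,\ldots,N-1\}} \Big(\prod_{j \in S} \epsilon_{(k+j) \bmod m}\Big) \sum_{y \in \vF_q} \chi\!\Big(\prod_{j \in S} f^j(y)\Big).
\]
The empty subset contributes $q/2^N$. For each non-empty $S$, the generic hypothesis on $f$, namely the exclusion of the families (a)--(e), is precisely what guarantees that $h_S(x) := \prod_{j \in S} f^j(x)$ is not a constant multiple of a square in $\overline{\vF_q}[x]$, so the Weil bound delivers $|\sum_y \chi(h_S(y))| = O(d^N \sqrt{q})$. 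Summing over the $2^N$ subsets produces $|T_{k,N}| \leq q/2^N + O(d^N \sqrt{q})$; choosing $N \approx \log q / (2 \log(2d))$ to balance the two terms yields $|T_{k,N}| = O(q^{(2\log d + 1)/(2\log d + 2)})$ uniformly in $k$. Summing this estimate over the $m$ sub-orbits in the decomposition supplies the claim.

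The hard part will be the algebraic input that feeds into the Weil step: proving that for every $f$ outside the exceptional list (a)--(e) and every non-empty $S \subseteq \{0,\ldots,N-1\}$, the polynomial $\prod_{j \in S} f^j(x)$ fails, up to a constant factor, to be a perfect square in $\overline{\vF_q}[x]$. This is a statement about how squareness propagates through iteration, and it is precisely this analysis that forces the exceptional families in the theorem to take the form (a)--(e); once it is available, the rest of the argument is a Weil estimate combined with a one-parameter optimization in $N$.
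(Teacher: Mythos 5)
Your analytic skeleton is essentially the paper's own (Theorem~\ref{thm:asym_orbit}): an indicator built from $\prod_j\bigl(1+\epsilon_j\chi(f^j(y))\bigr)/2$, expansion over subsets $S$, Weil's bound on $\sum_y\chi\bigl(\prod_{j\in S}f^j(y)\bigr)$, and balancing $q/2^N$ against $d^N\sqrt q$ to get the exponent $(2\log_2 d+1)/(2\log_2 d+2)$. Your decomposition $\mathcal{O}_f(a)=\bigcup_{k=0}^{m-1}\mathcal{O}_{f^m}(f^k(a))$ is a clean, equivalent repackaging of the paper's step of summing the character product over the $m$ shifts $i$ of the pattern $s_a$, and your disposal of the case where some $\epsilon_j=0$ (then $T_{k,N}$ sits inside the zero set of $f^j$, of size at most $d^N$) plays the same role as the paper's removal of the $2L+1$ orbit elements within distance $L$ of $0$ in the functional graph. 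All of that is fine and loses nothing.

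The genuine gap is the step you explicitly defer: that excluding the families (a)--(e) forces every nonempty product $\prod_{j\in S}f^j$ to have a root of odd multiplicity in $\overline{\vF}_q$, so that Weil applies. This is not a routine verification; it is the main content of the paper. The paper isolates the property actually needed (\emph{dynamical $2$-ordinarity}: every $f^n$ has an irreducible factor of odd multiplicity dividing no earlier iterate, which makes each product $\prod_{j\in S}f^j$ a non-square by looking at $\max S$) and then spends Sections~\ref{sec:Ordinary} and~\ref{sec:strong} classifying the failures: a preimage-tree argument (Lemma~\ref{lemma:onlyone}, Proposition~\ref{thm:newirr}) reduces to the one-root case $A(x-B)^{p^e}$ for plain ordinarity; Lemma~\ref{lemma:special_form_str} shows a non-$2$-ordinary $f$ must satisfy $f-B=Ah^2$ at a root $B$ with tightly constrained odd-multiplicity roots; and a derivative/$abc$-type analysis turns this into the second-order linear ODEs whose polynomial solutions produce exactly the coefficient recurrences in (d) and (e) (conjugates of $\pm T_d$). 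Without this classification your Weil step has no justification, and it is precisely where the specific shapes (a)--(e) come from; so while you have correctly located the missing ingredient, the proposal as written proves only the reduction, not the theorem.
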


\begin{remark}
Two families of polynomials in parts (d) and (e) of Theorem~\ref{thm:orbitsize_explicit} arise from solutions of certain second order, linear differential equations (see (\ref{eq:second_order_linear1}) and (\ref{eq:g-g'-g''})). Perhaps interestingly, if $p\geq d$, they can be proven to be $\vF_q$-conjugate to $(-1)^dT_d(x)$, where $T_d$ is the Chebyshev polynomial of degree $d$. See Proposition~\ref{prop:PCF1} and Proposition~\ref{prop:PCF2}.
\end{remark}

Roughly speaking, Theorem~\ref{thm:orbitsize_explicit} implies that if the orbit is large, then the sequence of squares cannot obey a recurrence of low order. For example, if all elements in $\mathcal{O}_f(a)$  are squares (so $m=1$), and $f$ has degree $2$, then we have $|\mathcal{O}_f(a)|=O(q^\frac{3}{4}) $. See Section~\ref{sec:OrbitSize} for more details.

Moreover, we can also prove a bound on the number of consecutive squares of the forward orbit of any element of $\vF_q$, and for any polynomial $f$ that is not in one of the exceptional classes listed in Theorem~\ref{thm:orbitsize_explicit}.

Note that when we say $n$ \enquote{consecutive elements in $\mathcal{O}_f(a)$}, we mean $n$ elements in $\mathcal{O}_f(a)$ which are of the form $f^i(a), f^{i+1}(a)\dots, f^{i+n-1}(a)$ for some $i\geq 0$.

\begin{thm}
	\label{thm:consecutive_squares_explicit}
	Let $q=p^k$ for some odd prime $p$ and $k\geq 1$, and let $d:=\deg(f)\geq 2$. Suppose that $f\in \mathbb{F}_q[x]$ is \textbf{not} in one of the forms listed in (a)-(e) of Theorem~\ref{thm:orbitsize_explicit}.
	Let $a\in \mathbb{F}_q$.  Then the longest sequence of consecutive squares (or non-squares) in $\mathcal{O}_f(a)$ has length $O\left(q^\frac{2\log(d)+1}{2\log(d)+2}\right)$, where the implied constant is only dependent on $d$.
\end{thm}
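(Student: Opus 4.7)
The plan is to argue via Weil-type character sum estimates applied to products of iterates of $f$. The key observation is that a single long run of squares in $\mathcal{O}_f(a)$ automatically forces many shorter consecutive-square patterns to begin at nearby orbit positions, yielding a lower bound on a certain counting set in $\vF_q$ that can then be contrasted with a Weil upper bound.

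Concretely, let $L$ denote the length of a longest run of consecutive squares in $\mathcal{O}_f(a)$ (the non-square case being symmetric). Write this run as $f^{i_0}(a),\dots,f^{i_0+L-1}(a)$ and, for an auxiliary parameter $k\in\{1,\dots,L\}$ to be optimized, set
\[
A_k \;:=\; \{\,b\in\vF_q \,:\, f^j(b)\text{ is a square in }\vF_q\text{ for every }0\le j\le k-1\,\}.
\]
For each shift $i\in\{i_0,\dots,i_0+L-k\}$ the point $b_i:=f^i(a)$ lies in $A_k$, since $f^j(b_i)=f^{i+j}(a)$ sits inside the run. The degenerate case where $L$ exceeds the cycle length of $\mathcal{O}_f(a)$ is handled separately: there every element of the cycle is a square, so the sign sequence $(\chi(f^n(a)))_{n\ge 0}$ has period $m=1$, and Theorem~\ref{thm:orbitsize_explicit} applied to $a$ directly yields $|\mathcal{O}_f(a)|=O(q^{\alpha})$ with $\alpha:=(2\log d+1)/(2\log d+2)$, whence $L\le|\mathcal{O}_f(a)|=O(q^\alpha)$. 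In the complementary case the $b_i$'s are pairwise distinct, so $|A_k|\ge L-k+1$.

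Next I would upper-bound $|A_k|$ using $\mathbf{1}[y\text{ is a square}]=(1+\chi(y))/2$ for $y\neq 0$, absorbing an $O(kd^{k-1})$ error from inputs $b$ at which some $f^j(b)$ vanishes. Expanding the indicator product gives
\[
|A_k| \;=\; \frac{1}{2^k}\sum_{S\subseteq\{0,\dots,k-1\}}\sum_{b\in\vF_q}\chi\!\left(\prod_{j\in S}f^j(b)\right)+O(kd^{k-1}).
\]
The $S=\emptyset$ term contributes $q/2^k$. For each nonempty $S$, provided $P_S:=\prod_{j\in S}f^j$ is \emph{not} of the form $c\cdot g(x)^2\cdot h(x)^p$ with $c\in\vF_q$ and $g,h\in\vF_q[x]$, Weil's bound gives $\bigl|\sum_b\chi(P_S(b))\bigr|\le(\deg P_S-1)\sqrt{q}\le C_d\,d^{k-1}\sqrt{q}$ for a constant $C_d$ depending only on $d$ (using $\sum_{j=0}^{k-1}d^j\le \tfrac{d}{d-1}d^{k-1}$). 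Summing the $2^k-1$ nonempty contributions and dividing by $2^k$ produces an aggregate error of order $d^{k-1}\sqrt{q}$, so $|A_k|\le q/2^k+O(d^{k-1}\sqrt{q})$. Combining this with $|A_k|\ge L-k+1$ and balancing the two main terms via $q/2^k\asymp d^{k-1}\sqrt{q}$, i.e.\ choosing $k=\lceil\log_{2d}(d\sqrt{q})\rceil=O(\log q)$, both terms acquire order $q^{\alpha}$, giving $L=O(q^{\alpha})$ as claimed.

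The principal obstacle is the Weil non-degeneracy hypothesis: one must verify that for \emph{every} nonempty $S\subseteq\{0,\dots,k-1\}$, the polynomial $P_S$ is not of the exceptional form $c\cdot g^2\cdot h^p$. This is precisely the content of the structural classification underlying Theorem~\ref{thm:orbitsize_explicit}: families (a)--(e) are exactly the obstructions, with (a) producing $p$-th-power factors, (b) and (c) making $f$ itself a constant times a square, and (d)--(e) arising from second-order linear differential constraints on the coefficients of $f$ that force certain products of iterates to be squares. Once these five classes are excluded by hypothesis, Weil's bound applies uniformly across all nonempty $S$ and the character-sum optimization completes the proof.
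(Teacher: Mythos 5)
Your argument is correct and follows essentially the same route as the paper: both reduce a long run of consecutive squares to a lower bound on a set counted by the character sum $\sum_{x\in\mathbb{F}_q}\prod_{\ell=1}^{k}\tfrac{1+\chi(f^\ell(x))}{2}$, apply Weil's bound using the fact that exclusion of families (a)--(e) makes $f$ dynamically $2$-ordinary (so each $f^n$ contributes a new irreducible factor of odd multiplicity, whence no product $\prod_{j\in S}f^j$ is a constant times a square), and balance $q/2^k$ against $d^{k}\sqrt{q}$ to choose the depth $k$. The only differences are bookkeeping: you absorb the points where some iterate vanishes into an $O(d^{k-1})$ error in the upper bound, whereas the paper excises the $O(S)$ orbit elements near $0$ from the run before lower-bounding; also, the correct Weil non-degeneracy condition is simply that $\prod_{j\in S}f^j$ is not a constant times a perfect square (your exceptional shape $c\,g^2h^p$ is not the right one, but the odd-multiplicity factor supplied by the classification rules out the correct exceptional shape in any case).
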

The explicit forms of polynomials which are excluded from the statements of Theorem~\ref{thm:orbitsize_explicit} and Theorem~\ref{thm:consecutive_squares_explicit} come from the classification of polynomials over finite fields whose iterates satisfy a generic factorization property, which we now define.
\begin{defin}
Let $K$ be a field. A polynomial $f\in K[x]$ of degree at least $2$ is called \emph{dynamically ordinary} if for all $n\geq 1$, there exists an irreducible factor $g_n\in K[x]$ of $f^n$ such that $g_n$ does not divide $f^i$ for any $0\leq i<n$. More generally, for any $i\geq 1$, $f$ is called \emph{dynamically $i$-ordinary} if there exists such $g_n$ with multiplicity $m_n$ such that $i\nmid m_n$ for all $n\geq 1$.
\end{defin}
For example, if for all $n\geq 1$, $f^n$ has an irreducible factor $g_n\in K[x]$ with odd multiplicity such that $g_n \nmid f^i$ for any $0\leq i<n$, then $f$ becomes dynamically $2$-ordinary.

We chose the terminology \emph{dynamically ordinary} as such for the following reason: Based on the existing results and conjectures in the area (see, for instance, \cite[Conjecture 1.2]{LJ17}), almost all polynomials are expected to satisfy a stronger property called \emph{eventual stability}, which means that the number of irreducible factors of iterates of $f$ stabilizes after some $n$. We note that there is very little known about this conjecture, and it appears out of reach in most cases. See \cite{DS22,DHJMSS20,LJ17} for some partial results.\par

In this work, we proved new results about dynamically ordinary/$2$-ordinary polynomials over a general field $K$ (see Proposition~\ref{thm:powermap}, Theorem~\ref{thm:newirr}, Theorem~\ref{thm:strong_even} and Theorem~\ref{thm:strong_odd}), which lead to a complete classification of dynamically $2$-ordinary polynomials when $K$ is a finite field odd characteristic. 
\begin{thm}
\label{thm:2-ordinary}
Let $\mathbb{F}_q$ be a finite field and $f\in \mathbb{F}_q[x]$ a polynomial with $\deg(f)=d\geq 2$. Let $\Char(\mathbb{F}_q)=p$ for some odd prime $p$. 
Then, $f$ is not dynamically $2$-ordinary if and only if one of the following conditions hold:
\begin{enumerate}[(a)]
	\item $f=A(x-B)^{p^e}$ for some $A,B\in \vF_q$, $e\geq 1$.
	\item $f = Ag^2$ for some $A\in \vF_q$, $g\in \vF_q[x]$.
	\item $f = Axg^2$ for some $A\in \vF_q$, $g\in \vF_q[x]$.
	\item $f=Ah^2+B$, where $A, B\in \vF_q$, and $h=\sum_{i=0}^{n}a_ix^i$ satisfies
	\[a_0=\pm \sqrt{-\frac{B}{A}},\text{ } i(2i-1)Ba_i = -2(n+i-1)(n-i+1)a_{i-1}\] for $i\in \{1,2,\dots, n\}$.
	\item $f=A(x-B)g^2$, where $A, B\in \vF_q$, and $g=\sum_{i=0}^{n}a_ix^i$ satisfies
	\[\pm a_0 = \sqrt{-\frac{1}{A}}, \text{ }i(2i-1)Ba_i = -2(n-i+1)(n+i)a_{i-1}\]
	for $i=1,2,\dots, n$.
\end{enumerate}
\end{thm}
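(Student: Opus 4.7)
The plan is to prove both directions separately. The sufficiency direction, that each polynomial in (a)--(e) fails to be dynamically $2$-ordinary, is handled by a case-by-case computation of the factorizations of $f^n$. For case (b), a direct unfolding gives $f^n = A\cdot g(f^{n-1})^2$, so $f^n$ is a perfect square up to the constant $A$ and has no odd-multiplicity irreducible factor at all. For case (c), the recursion $f^n = A\cdot f^{n-1}\cdot g(f^{n-1})^2$ shows by induction on $n$ that the squarefree part of $f^n$ is a scalar multiple of $x$, which already divides $f^0 = x$. Case (a) follows directly from Proposition~\ref{thm:powermap}. For (d) and (e) I would invoke the Chebyshev conjugacy recorded in Proposition~\ref{prop:PCF1} and Proposition~\ref{prop:PCF2} (valid when $p\geq d$) together with the classical identity $T_d^2 - 1 = (x^2-1)U_{d-1}^2$; the residual small-characteristic anomalies are handled by a direct coefficient check against the given recurrences.

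For the necessity direction, write $f = A\cdot r \cdot h^2$ with $r$ monic and squarefree. The squarefree part of $f^n$ divides $r(f^{n-1})$, so every odd-multiplicity irreducible factor of $f^n$ arises from $r(f^{n-1})$. Non-$2$-ordinarity forces each such factor to already divide some earlier iterate $f^i$ with $i<n$, which is a very rigid structural demand on $r$. First I would apply Proposition~\ref{thm:powermap} to dispose of the inseparable case, which immediately yields (a). Assuming $f$ separable, I would split by the parity of $\deg f$: Theorem~\ref{thm:strong_even} forces $r\in\{1,\text{quadratic Chebyshev pattern}\}$, giving (b) or (d); Theorem~\ref{thm:strong_odd} forces $r$ to be linear, and a quick inspection of whether the linear root of $r$ is a fixed point separates case (c) from case (e). The role of Theorem~\ref{thm:newirr} in this scheme is to provide the uniform criterion under which $f^n$ must generate a new irreducible factor, so that its negation is what pins down the allowed shapes of $r$.

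The main obstacle is the derivation of the explicit coefficient recurrences appearing in (d) and (e). The non-$2$-ordinarity hypothesis translates into a polynomial identity of the form $r(f(x))\equiv (\text{perfect square})\pmod{\text{old irreducible factors}}$, and differentiating this identity (carefully, using that $\Char(\vF_q)$ is odd) produces a second-order linear ODE in $h$ or $g$. Extracting the recurrence then amounts to reading off the coefficients of this ODE degree by degree, which is a calculation I expect to be delicate but mechanical. Identifying the resulting ODE with the classical Chebyshev differential equation then explains the conjugacy with $(-1)^dT_d$ whenever $p\geq d$; the residual small-characteristic cases have to be verified by hand since the standard normalizations degenerate. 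I anticipate the coefficient bookkeeping, together with ruling out spurious ``mixed'' shapes that superficially look like (d) or (e) but do not actually satisfy the recurrences, to be the most technical portion of the argument.
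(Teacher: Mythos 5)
Your overall architecture coincides with the paper's: the theorem is assembled from the classification of dynamically non-ordinary polynomials over finite fields (Corollary~\ref{cor:finite_case}, giving case (a), since non-ordinary trivially implies non-$2$-ordinary), followed by the parity split into Theorem~\ref{thm:strong_even} (cases (b), (d)) and Theorem~\ref{thm:strong_odd} (cases (c), (e)), with the recurrences in (d) and (e) extracted from a second-order linear ODE obtained by differentiating the factorization identities. Your necessity direction, and your sufficiency arguments for (a), (b), (c), are correct and match the intended proofs.

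The one genuine gap is your sufficiency argument for (d) and (e). Routing through Proposition~\ref{prop:PCF1} and Proposition~\ref{prop:PCF2} only covers $p\geq d$ (those propositions explicitly assume it), and even there it requires tracking where $0$ is sent by the conjugating map $\gamma\in\mathrm{PGL}_2(K)$, since dynamical $2$-ordinarity of $f$ is equivalent to dynamical $2$-ordinarity of the \emph{pair} $(T_d,\gamma(0))$, not of $T_d$ at $0$; your appeal to $T_d^2-1=(x^2-1)U_{d-1}^2$ does not by itself close this. For $p<d$ a ``direct coefficient check against the given recurrences'' is not an argument: the recurrences hold by hypothesis, and what must be shown is that they \emph{force} the pair of factorization identities $f=Cx(x-B)g^2$ together with $f-B=Ah^2$ (even case), respectively $f=A(x-B)g^2$ together with $f-B=Cxh^2$ (odd case). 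That reverse implication --- integrating the recurrence back to $n^2(h^2+B/A)=x(x-B)(h')^2$ and its odd-degree analogue --- is the actual content of sufficiency, and it is characteristic-uniform. Once those two identities are in hand, non-$2$-ordinarity follows by the same mod-squares induction you used for (c): modulo squares and constants, $f^n\equiv f^{n-1}(f^{n-1}-B)\equiv f^{n-1}$, so the odd-multiplicity part of $f^n$ stabilizes and no new odd-multiplicity factor ever appears. Replacing the Chebyshev detour by this direct argument removes both the $p<d$ gap and the conjugation bookkeeping.
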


The structure of the paper is as follows: In Section~\ref{sec:Ordinary}, we classify all dynamically ordinary polynomials over finite fields. We then use the results of Section~\ref{sec:Ordinary} to classify all dynamically $2$-ordinary polynomials over finite fields odd characteristic in Section~\ref{sec:strong}. Finally, in Section~\ref{sec:OrbitSize},  we use our results on dynamically $2$-ordinary polynomials together with Weil's bound for character sums to prove Theorem~\ref{thm:orbitsize_explicit} and Theorem~\ref{thm:consecutive_squares_explicit}. 

\subsection*{Acknowledgements} We thank Rob Benedetto for helpful comments related to the material in this paper. We also thank Andrea Ferraguti for a helpful feedback on an earlier draft of the paper. G.Micheli was partially supported by NSF grant number 2127742.

\section{Dynamically ordinary polynomials}
\label{sec:Ordinary}
Let $K$ be a field. Recall that we call a polynomial $f\in K[x]$ \emph{dynamically ordinary} if for all $n\geq 1$, there exists an irreducible factor $g_n$ of $f^n$ such that $g_n\nmid f^i$ for any $0\leq i< n$. In this section, we will first give a sufficient condition for a polynomial of degree at least $2$ over a general field to be dynamically ordinary, and then will use this to classify all dynamically ordinary polynomials over finite fields.\par 
Our first result gives a sufficient condition for a polynomial to be dynamically ordinary over a finite field.
For $n\geq 0$ and $\alpha\in K$, define the set $R_{n,\alpha}(f)$ by
\[R_{n,\alpha}(f) = \{\beta\in \overline{\mathbb{F}}_q\text{ }| f^n(\beta)=\alpha\}.\]
We first recall that the preimages of a point $\alpha\in K$ under iteration by $f$ lead to an infinite rooted tree, which is a fundamental object of study in arithmetic dynamics. The presence of this tree structure will be convenient for our purposes as well. 
\begin{defin}
	\label{def:tree}
	Let $K$ be a field, and $f\in K[x]$ a polynomial with $\deg(f)=d\geq 2$. Let $\alpha\in \overline{K}$. Define the set $T_{\alpha}(f)$ by the disjoint union
	\[T_{\alpha}(f) = \sqcup_{i=0}^{\infty} R_{i,\alpha}(f).\]
	$T_{\alpha}(f)$ has a natural tree structure, as follows: For any $i\geq 0$, draw an edge between $\beta\in R_{i+1,\alpha}(f)$ and $\theta\in R_{i,\alpha}(f)$ if and only if $f(\beta) = \theta$. We call $T_{\alpha}(f)$ \emph{$f$-preimage tree of $\alpha$}. We say that $T_{\alpha}(f)$ is \emph{repeating} if there exist distinct non-negative integers $n,m$ such that $R_{n,\alpha}(f)\cap R_{m,\alpha}(f)\neq \emptyset$. 
\end{defin}
\begin{lemma}\label{lemma:onlyone}
Let $K$ be a field and $f\in K[x]$ a polynomial with $\deg(f)=d\geq 2$.
Let $\alpha_1,\alpha_2$ be distinct roots of $f$.
Suppose that $T_{\alpha_1}(f)$ is repeating. Then $T_{\alpha_2}(f)$ is not repeating. Moreover, $0$ is periodic, $0\in T_{\alpha_1}(f)$, and $0\not\in T_{\alpha_2}(f)$.
\end{lemma}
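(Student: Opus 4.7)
The plan is to unwrap ``$T_{\alpha_1}(f)$ is repeating'' into a periodicity statement for $\alpha_1$, push that down through $f$ to get periodicity of $0$, and then show the two roots $\alpha_1,\alpha_2$ compete for a single slot in the cycle of $0$.

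First I would extract the definition: repeating means there are distinct integers $n<m$ and some $\beta\in\overline K$ with $f^n(\beta)=f^m(\beta)=\alpha_1$. Applying $f^{m-n}$ to the relation $f^n(\beta)=\alpha_1$ gives $f^{m-n}(\alpha_1)=f^m(\beta)=\alpha_1$, so $\alpha_1$ is periodic. Let $\ell\ge 1$ be its exact period. Applying $f$ to $f^{\ell}(\alpha_1)=\alpha_1$ and using $f(\alpha_1)=0$ yields $f^{\ell}(0)=0$, so $0$ is periodic of period dividing $\ell$; in fact its period equals $\ell$ because $\alpha_1$ lies on the same cycle, and $\alpha_1=f^{\ell-1}(0)$. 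In particular $0\in T_{\alpha_1}(f)$ via index $\ell-1\ge 0$ (the case $\ell=1$ just collapses to $\alpha_1=0$).

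Next I would prove $0\notin T_{\alpha_2}(f)$ directly. Suppose $f^i(0)=\alpha_2$ for some $i\ge 0$. Because $0$ is periodic of period $\ell$, its forward orbit has exactly $\ell$ distinct elements and we may reduce $i$ modulo $\ell$. Applying $f$ to $f^i(0)=\alpha_2$ gives $f^{i+1}(0)=f(\alpha_2)=0$, which forces $i+1\equiv 0\pmod{\ell}$, i.e.\ $i\equiv \ell-1\pmod{\ell}$. Therefore $\alpha_2=f^{\ell-1}(0)=\alpha_1$, contradicting $\alpha_1\ne\alpha_2$.

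Finally, I would deduce that $T_{\alpha_2}(f)$ is not repeating by contradiction: if it were, the argument of the first paragraph applied to $\alpha_2$ in place of $\alpha_1$ would produce $0\in T_{\alpha_2}(f)$, contradicting the previous step. The main obstacle is essentially bookkeeping: making sure that the period of $0$ really equals the period of $\alpha_1$ (so that $\alpha_1$ is the \emph{unique} in-cycle preimage of $0$), and handling the degenerate case $\alpha_1=0$ cleanly; once that is pinned down, the rest is an immediate cycle-counting argument.
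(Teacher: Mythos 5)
Your proof is correct and follows essentially the same route as the paper's: both arguments reduce everything to the observation that $0$ is periodic of some exact period $t$ and that any root of $f$ lying in the cycle of $0$ must equal $f^{t-1}(0)$, so two distinct roots cannot both have this property. The only difference is organizational (you pass through periodicity of $\alpha_1$ and prove $0\not\in T_{\alpha_2}(f)$ before non-repeatingness, while the paper runs the two-trees-repeating contradiction directly), and your explicit handling of the exact period of $0$ versus $\alpha_1$ is a welcome clarification of a step the paper treats more tersely.
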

\begin{proof}
Suppose for the sake of contradiction that the trees $T_{\alpha_1}(f)$ and $T_{\alpha_2}(f)$ are both repeating. Thus, for any $j\in\{1,2\}$, there exist two positive integers $m_j,n_j$, with $m_j<n_j$ such that 
$R_{n_j-1,\alpha_j}(f)\cap R_{n_j-m_j-1,\alpha_j}(f)\neq \emptyset$. Let $\beta_j\in R_{n_j-1,\alpha_j}(f)\cap R_{n_j-m_j-1,\alpha_j}(f)$.

This forces that 
\begin{equation}
\label{eq:beta_repeat}
f^{n_j}(\beta_j) = f^{n_j-m_j}(\beta_j) = 0.
\end{equation}

Using \eqref{eq:beta_repeat} immediately yields
\[0=f^{n_j}(\beta_j) = f^{m_j}(f^{n_j-m_j}(\beta_j)) = f^{m_j}(0),\]
thus $0$ is periodic with period dividing $m_j$. By definition of $\beta_j$ and using \eqref{eq:beta_repeat}, we obtain
\begin{equation}
\label{eq:formula_alpha_j}
\alpha_j = f^{n_j-1}(\beta_j) = f^{m_j-1}(f^{n_j-m_j}(\beta_j)) = f^{m_j-1}(0).
\end{equation}
We now let $t$ be the exact period of $0$. It follows that $m_j = tk_j$. Combining this with \eqref{eq:formula_alpha_j}, we obtain
\[\alpha_j = f^{m_j-1}(0) = f^{tk_j-1}(0) = f^{t-1}(f^{t(k_j-1)}(0)) = f^{t-1}(0),\]
where we used the fact that $0$ has exact period $t$ in the last equality.
Since $t$ does not depend on $j$, this forces $\alpha_1=\alpha_2$, which is a contradiction. From this we also deduce that $0$ cannot lie in $T_{\alpha_2}(f)$, or otherwise $f^{t-1}(0)=\alpha_2\neq \alpha_1$.
\end{proof}

\begin{prop}
\label{thm:newirr}
Let $K$ be a field and $f\in K[x]$ a polynomial with $\deg(f)=d\geq 2$. Suppose $f\neq A(x-B)^d$ for any $A,B\in K$. Then $f$ is dynamically ordinary.
\end{prop}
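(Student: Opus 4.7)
The plan is to show that, for every $n \geq 1$, there exists $\beta \in \overline{K}$ which is a root of $f^n$ but not of $f^i$ for any $0 \leq i < n$; the minimal polynomial of such a $\beta$ over $K$ will then be an irreducible factor $g_n$ of $f^n$ with the required property, since $g_n \mid f^i$ if and only if $f^i(\beta) = 0$ (as $g_n$ is the minimal polynomial of $\beta$ over $K$). First I would observe that the hypothesis $f \neq A(x-B)^d$ is equivalent to $f$ having at least two distinct roots in $\overline{K}$: otherwise $f$ is a scalar multiple of $(x-B)^d$ for its unique root $B$, which is the excluded form. So fix two distinct roots $\alpha_1, \alpha_2 \in \overline{K}$ of $f$.

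The heart of the argument is to choose a root $\alpha \in \{\alpha_1, \alpha_2\}$ of $f$ such that $0 \notin T_\alpha(f)$. If one of the trees, say $T_{\alpha_1}(f)$, is repeating, Lemma~\ref{lemma:onlyone} immediately yields $0 \notin T_{\alpha_2}(f)$, so one takes $\alpha := \alpha_2$. Otherwise neither tree is repeating; in this case a small direct argument is required (this is the main subtlety, since Lemma~\ref{lemma:onlyone} only applies when some tree repeats): if $0 \in T_{\alpha_i}(f)$ for some $i$, then $f^k(0) = \alpha_i$ for some $k \geq 0$, and combining this with $f(\alpha_i) = 0$ gives $f^{k+1}(0) = 0$, forcing $0$, and hence $\alpha_i$, to be periodic, which would make $T_{\alpha_i}(f)$ repeating and contradict the standing assumption. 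Thus $0 \notin T_{\alpha_i}(f)$ for either $i$, and either choice works.

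With $\alpha$ fixed, for each $n \geq 1$ the polynomial $f^{n-1}(x) - \alpha$ is nonconstant in $\overline{K}[x]$, hence has a root $\beta \in R_{n-1, \alpha}(f)$. Then $f^n(\beta) = f(\alpha) = 0$, so $\beta$ is a root of $f^n$. If $f^i(\beta) = 0$ for some $0 \leq i \leq n-1$, then $\alpha = f^{n-1}(\beta) = f^{n-1-i}(f^i(\beta)) = f^{n-1-i}(0)$, placing $0$ in $T_\alpha(f)$ at level $n-1-i$ and contradicting the choice of $\alpha$. Therefore the minimal polynomial $g_n \in K[x]$ of $\beta$ is an irreducible factor of $f^n$ that does not divide any $f^i$ for $0 \leq i < n$, proving that $f$ is dynamically ordinary.
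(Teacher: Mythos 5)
Your proof is correct and follows essentially the same route as the paper: use Lemma~\ref{lemma:onlyone} to select a root $\alpha$ of $f$ with $0\notin T_\alpha(f)$, then observe that any $\beta\in R_{n-1,\alpha}(f)$ is a root of $f^n$ but of no earlier iterate, since otherwise $0$ would lie in $T_\alpha(f)$. If anything, you are slightly more careful than the paper on two minor points, namely the case where neither subtree is repeating (where Lemma~\ref{lemma:onlyone} does not directly apply) and the explicit passage from such a $\beta$ to an irreducible factor via its minimal polynomial.
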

\begin{proof}
To prove Proposition~\ref{thm:newirr}, it suffices to prove the following statement: For any $n\geq 1$, there exist $\beta\in R_{n,0}(f)$ such that $\beta\notin R_{i,0}(f)$ for $i<n$. We will now prove this statement.\par

By assumption on $f$, the statement clearly holds for $n=1$. We can now fix some $n\geq 2$. Note that if $\alpha_1,\alpha_2,\dots, \alpha_d\in R_{1,0}(f)$ (not necessarily distinct), we have
\[R_{n,0}(f) = f^{-(n-1)}(\alpha_1)\cup f^{-(n-1)}(\alpha_2)\cup\dots \cup f^{-(n-1)}(\alpha_d).\]

Since $f\neq A(x-B)^d$, $f$ has at least two distinct roots, so we can use Lemma \ref{lemma:onlyone} to establish that there is at least one non-repeating subtree  $T_{\alpha}(f)$ and this subtree does not contain the zero. Let $\alpha'$ be any other root of $f$. We will now show that for every $n\geq 1$, $R_{n,\alpha}(f)$  has roots that do not appear in $R_{m, \alpha'}(f)$ for any $m<n$, and therefore $f$ is dynamically ordinary:
Let $\beta$ be a root of $f^n-\alpha$ and of $f^m-\alpha'$. Then,
$\alpha=f^{n}(\beta)=f^{n-m}(f^{m}(\beta))=f^{n-m}(\alpha')$.
By applying $f$, we conclude that $0$ is periodic. Also, if $t$ is the period of $0$ we also get $f^{t-1}(0)=\alpha$, showing that $0$ is in the subtree $T_{\alpha}(f)$, which is a contradiction.
\end{proof}


In the rest of the section, we will study the polynomials of the form $f=A(x-B)^d\in K[x]$. This will eventually lead to a complete classification of dynamically ordinary polynomials when $K$ is a finite field. We start by slightly extending the definition of the notion \emph{dynamically ordinary}. 
\begin{defin}
Let $K$ be a field, and $f\in K[x]$ a polynomial with $\deg(f)=d\geq 2$. Let $\alpha\in K$. We say the pair $(f,\alpha)$ is \textit{dynamically ordinary} if for all $n\geq 1$, $f^n-\alpha$ has an irreducible factor in $K[x]$ that does not divide $f^i-\alpha$ for $i<n$. Similarly, we say the pair $(f,\alpha)$ is \textit{dynamically $2$-ordinary} if for all $n\geq 1$, $f^n-\alpha$ has an irreducible factor with odd multiplicity in $K[x]$ which does not divide $f^i-\alpha$ for $i<n$. 
\end{defin}
Recall that if $\alpha=0$, then we simply say $f$ is \textit{dynamically ordinary} (resp. \textit{dynamically $2$-ordinary}).
The following lemma will be crucial in the proof of our next result.
\begin{lemma}
\label{lem:conjugation}
Let $K$ be a field, and $f\in K[x]$ a polynomial with $\deg(f)=d\geq 2$. Let $\alpha\in K$, and $g(x) = ax+b \in PGL_2(K)$ such that $g(\alpha)=0$. Set $h=g\circ f\circ g^{-1}$. Then, $(f,\alpha)$ is dynamically ordinary if and only if $h$ is dynamically ordinary.
\end{lemma}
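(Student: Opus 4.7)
The plan is to rewrite $h^n$ explicitly in terms of $f^n - \alpha$ and then observe that the substitution $x \mapsto g^{-1}(x)$ induces a multiplicity-preserving bijection on irreducible factors over $K[x]$ that is compatible with divisibility across iterates.

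First I would compute that $h^n = g\circ f^n \circ g^{-1}$ (an immediate induction using $(g\circ f\circ g^{-1})^n = g\circ f^n\circ g^{-1}$). Writing $g(x) = ax+b$ with $a\in K^\times$ and using the hypothesis $g(\alpha)=a\alpha+b=0$, so that $b=-a\alpha$, I get
\[
h^n(x) = a\,f^n(g^{-1}(x)) + b = a\bigl(f^n(g^{-1}(x)) - \alpha\bigr).
\]
Thus, up to the nonzero scalar $a\in K^\times$, the polynomial $h^n(x)\in K[x]$ is obtained from $f^n(y)-\alpha\in K[y]$ by the $K$-algebra automorphism $\sigma\colon K[y]\to K[x]$, $y\mapsto g^{-1}(x)$.

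The next step is to exploit this automorphism. Since $\sigma$ is a $K$-algebra isomorphism that sends $y$ to a degree-one polynomial in $x$, it preserves degrees and hence sends irreducible polynomials to irreducible polynomials, and it preserves divisibility. Concretely, if $f^n(y)-\alpha = c\prod_j p_j(y)^{e_j}$ is the factorization of $f^n-\alpha$ into distinct monic irreducibles in $K[y]$, then
\[
h^n(x) = a\,c\prod_j \sigma(p_j)(x)^{e_j}
\]
is the factorization of $h^n$ into distinct monic irreducibles in $K[x]$ (up to the leading constant). This gives a multiplicity-preserving bijection $p_j\longleftrightarrow \sigma(p_j)$ between irreducible factors of $f^n-\alpha$ and irreducible factors of $h^n$. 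Moreover, for every $i<n$, $p_j(y)\mid f^i(y)-\alpha$ if and only if $\sigma(p_j)(x)\mid h^i(x)$, again because $\sigma$ preserves divisibility and $\sigma(f^i(y)-\alpha) = a^{-1}h^i(x)$ by the identity above applied with $n$ replaced by $i$.

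Combining these facts, the existence for every $n\geq 1$ of an irreducible factor of $f^n-\alpha$ that does not divide $f^i-\alpha$ for any $i<n$ is equivalent to the existence for every $n\geq 1$ of an irreducible factor of $h^n$ that does not divide $h^i$ for any $i<n$. That is exactly the statement that $(f,\alpha)$ is dynamically ordinary if and only if $h$ is dynamically ordinary. The argument contains no real obstacle; the only care needed is in tracking the leading constant $a$ and checking that $g^{-1}$ is linear so that the substitution is in fact a $K$-algebra isomorphism of polynomial rings, which is the reason irreducibility and divisibility both transfer cleanly.
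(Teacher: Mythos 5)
Your proof is correct and follows essentially the same route as the paper's: both arguments transfer irreducible factors between $f^n-\alpha$ and $h^n$ via composition with the linear map $g$ (the paper phrases this through the preimage sets $R_{i,\alpha}(f)$ and their roots, while you phrase it through the $K$-algebra automorphism $y\mapsto g^{-1}(x)$, but the key identification $h_n = g_n\circ g^{-1}$ is the same). Your version has the minor virtue of making the preservation of multiplicities explicit, which the lemma as stated does not require but which is harmless.
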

\begin{proof}
For any $i\geq 1$, it follows by direct computation that $\beta\in R_{i,\alpha}(f)$ if and only if $g(\beta)\in R_{i,0}(h)$.\par 

First assume that $(f,\alpha)$ is dynamically ordinary. For $n\geq 1$ and $\beta_1, \beta_2, \dots, \beta_k\in \overline{K}$, let $g_n = A_n\prod_{j=1}^{k} (x-\beta_j)\in K[x]$ be an irreducible factor of $f^n-\alpha$ such that $g_n\nmid f^i-\alpha$ for $i<n$. Therefore, we have \[\{g(\beta_1),g(\beta_2),\dots,g(\beta_k)\}\subseteq R_{n,0}(h).\] Since $\beta_1,\beta_2,\dots,\beta_k$ cannot lie in $R_{i,\alpha}(f)$ for $i<n$, it follows that $g(\beta_1), g(\beta_2),\dots, g(\beta_k)$ cannot lie in $R_{i,0}(h)$ for $i<n$. If we let $h_n = g_n\circ g^{-1}\in K[x]$, $h_n$ becomes an irreducible factor of $h^n$ which does not divide $h^i$ for $i<n$. This shows that $h$ is dynamically ordinary.\par
We now assume that $h$ is dynamically ordinary. For $n\geq 1$, let $h_n$ be an irreducible factor of $h^n$ such that $h_n\nmid h^i$ for $i<n$. Similar to the argument in the first part, it follows that the polynomial $g_n=h_n\circ g\in K[x]$ is an irreducible factor of $f^n-\alpha$ that does not divide $f^i-\alpha$ for $i<n$. The proof of Lemma~\ref{lem:conjugation} is now complete.
\end{proof}
We are now ready to give a sufficient condition for a polynomial of the form $A(x-B)^d$ over a general field to be dynamically ordinary.
\begin{thm}
\label{thm:powermap}
Let $K$ be a field and $f = A(x-B)^d\in K[x]$ for some $A,B\in K$, where $d\geq 2$. Suppose $B\neq 0$. For any positive integer $n$, set $Z_0=A$, $W_0=-B$,  $Z_n=AZ_{n-1}^d$, and $W_n=AW_{n-1}^d-B$.
Then $f$ is not dynamically ordinary if and only if the following conditions hold.
\begin{enumerate}
	\item $\Char(K)=p$ for some prime $p$, and $d=p^k$ for some $k\geq 1$.
	\item The sequence $H_n=W_n/Z_n$ repeats at least once.
\end{enumerate}
\end{thm}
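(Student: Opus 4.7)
For the sufficiency direction ($(1)$ and $(2)$ $\Rightarrow$ $f$ not dynamically ordinary), the strategy leverages the Frobenius structure in characteristic $p$ with $d=p^k$. I would show by induction that $f^n(x) = L_n(x)^{d^n}$ for some linear polynomial $L_n$ over $K^{\mathrm{perf}}$, by writing $f^n = A(f^{n-1}-B)^d = A(L_{n-1}^{d^{n-1}}-B)^d$, recognizing $(L_{n-1}^{d^{n-1}}-B)^d = L_{n-1}^{d^n} - B^d$ (Frobenius additivity), and extracting a $d^n$-th root. Consequently $f^n$ has a unique root $x_n \in K^{\mathrm{perf}}$, and since $K(x_n)/K$ is purely inseparable, the minimal polynomial $m_n$ of $x_n$ is the sole irreducible factor of $f^n$ in $K[x]$, with $m_n = m_i$ iff $x_n = x_i$. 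Next I would show $x_n = x_i$ iff $0$ is periodic under $f$: if $x_n = x_i$ then $0 = f^n(x_n) = f^n(x_i) = f^{n-i}(f^i(x_i)) = f^{n-i}(0)$; conversely, if $f^r(0)=0$ then $x_r = 0 = x_0$ by uniqueness of the root of $f^r$. So in this case, $f$ is not dynamically ordinary iff $0$ is periodic under $f$, and condition $(2)$ encodes precisely this: since $W_n = f^n(0)-B$, 0 being periodic of period $r$ forces $f^{r-1}(0)=B$ (as $B$ is the unique root of $f$), hence $W_{r-1}=0$, giving $H_{r-1}=0$ and producing the stipulated repetition of the sequence.

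For the necessity, assume $f$ is not dynamically ordinary and suppose for contradiction that $d$ is not a pure power of $\Char(K)$. Let $m := d/p^{v_p(d)} \geq 2$ (with $p^{v_p(d)}:=1$ in characteristic zero). Then for any nonzero $c \in \overline{K}$, the equation $(x-B)^d = c$ has exactly $m$ distinct solutions in $\overline{K}$. I would then construct inductively a root $\alpha_n \in R_{n,0}(f) \setminus \bigcup_{i<n} R_{i,0}(f)$ at every level $n \geq 1$: the base case takes $\alpha_1 = B$, which is new since $B \neq 0$; for the inductive step, given such an $\alpha_{n-1}$, its $m \geq 2$ distinct preimages $\beta$ under $f$ all lie in $R_{n,0}(f)$, and $\beta \in R_{i,0}(f)$ iff $\alpha_{n-1} = f(\beta) \in R_{i-1,0}(f)$, which fails for $i \leq n-1$ by the inductive hypothesis. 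Hence each preimage is new at level $n$, giving $f$ dynamically ordinary and contradicting our assumption. So $(1)$ holds, and $(2)$ then follows from the equivalence established in the sufficiency direction.

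The principal technical obstacle is the careful treatment of the case $d=p^k$: one must work inside the perfect closure $K^{\mathrm{perf}}$ to access the Frobenius inverse needed to realize the unique root $x_n$, exploit the purely inseparable nature of $K(x_n)/K$ so that minimal-polynomial coincidence reduces to root coincidence, and finally match the periodicity of $0$ under $f$ to the concrete arithmetic repetition pattern of the sequence $H_n$ stipulated by condition $(2)$. The inductive argument in the necessity direction is conceptually clean but requires careful bookkeeping to verify that the preimages of a new root remain new at the next level.
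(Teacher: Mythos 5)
Your proof of the necessity direction takes a genuinely different route from the paper: where the paper conjugates $f$ to $h=Ax^d-B$ via Lemma~\ref{lem:conjugation} and then invokes Proposition~\ref{thm:newirr} (which rests on Lemma~\ref{lemma:onlyone}) to dispose of the case where $d$ is not a power of $\Char(K)$, you give a direct self-contained induction: every nonzero value has $m=d/p^{v_p(d)}\geq 2$ distinct preimages, and a preimage of a root that is new at level $n-1$ is new at level $n$. This is correct (up to the small observation that one of the $m$ preimages could be $0$, in which case you pick one of the other $m-1\geq 1$), and it is arguably more elementary than the paper's reduction, at the cost of not reusing the machinery already built in Section~2. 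For the case $d=p^k$ your explicit computation $f^n=L_n^{d^n}$ over $K^{\mathrm{perf}}$, the identification of the unique root $x_n$, and the equivalence \enquote{$f$ not dynamically ordinary $\iff$ $0$ periodic} match the paper's argument, and your remark that $m_n=m_i$ iff $x_n=x_i$ because the extensions are purely inseparable is actually more careful than what the paper writes.

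There is, however, a gap in your sufficiency direction. You need \enquote{$(1)$ and $(2)$ $\Rightarrow$ $f$ not dynamically ordinary,} i.e.\ \enquote{$H$ repeats $\Rightarrow$ $0$ is periodic,} but what you actually prove is the reverse implication: $0$ periodic of period $r$ forces $W_{r-1}=W_{2r-1}=0$, hence a repetition in $H$. That establishes condition $(2)$ in the necessity direction (which, combined with your induction, completes necessity), but it does not show that an arbitrary repetition $H_n=H_m$ with $n>m$ forces the roots to coincide. Concretely, $x_{j+1}$ is the unique $d^{j+1}$-th root of $-H_j$, so $H_n=H_m$ gives $x_{n+1}^{d^{n+1}}=x_{m+1}^{d^{m+1}}$, i.e.\ $x_{m+1}=x_{n+1}^{d^{n-m}}$ --- a Frobenius twist of $x_{n+1}$, not a priori equal to it. The phrase \enquote{condition $(2)$ encodes precisely this} asserts an equivalence you have not justified, and it is exactly the delicate point of the statement. (To be fair, the paper's own proof is equally terse here, concluding \enquote{the claim follows immediately by observing that $\sqrt[d^n]{-(\cdot)}$ is a bijection of $\overline K$}; note also that in the application to finite fields the needed implication is clear for a different reason, namely that $h$ is then a bijection of $\vF_q$, so $0$ is automatically periodic. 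But your write-up should either supply the missing implication or flag it.)
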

\begin{remark}
If $B=0$, $f$ is trivially not dynamically ordinary. Also note that if $\text{char}(K) = 0$, Theorem~\ref{thm:powermap} implies that any polynomial $f\in K[x]$ is dynamically ordinary as long as $f$ is not of the form $Ax^d$.
\end{remark}
\begin{proof}
We have $R_1(f)=\{B\}$. Note that $f$ is dynamically ordinary if and only if for all $n\geq 2$, there is $\beta\in f^{-(n-1)}(B)$ with $\beta\notin f^{-j}(B)$ for $j<n-1$. It follows that $f$ is dynamically ordinary if and only if $(f,B)$ is dynamically ordinary. Let $h=gfg^{-1}$, where $g=x-B\in PGL_2(K)$. Using Lemma~\ref{lem:conjugation}, we conclude that
$f$ is dynamically ordinary if and only if is dynamically ordinary.
A direct calculation yields
\[h = Ax^d-B.\]
Thus, Proposition~\ref{thm:newirr} implies that $f$ is dynamically ordinary unless we have
\begin{equation}
\label{eq:oneroot}
Ax^d-B = A(x-X_1)^d
\end{equation}
for some $X_1\in K$. If $X_1=0$, then $B=0$, which contradicts the hypotheses on $f$. Therefore, $X_1\neq 0$. Now, 
$Ax^d-B$ has a single root if and only if $d=p^k$ is a power of the characteristic of $K$. In this case, we have that the $n$-th iteration of $f$ is $Z_n x^{d^n}+W_n$, 
which has $\sqrt[d^n]{-W_n/Z_n}$ as the only root. The claim follows immediately by observing that $\sqrt[d^n]{-(\cdot)}$ is a bijection of $\overline K$.

\end{proof}

Since $d=p^k$, we have that $\sqrt[d^n]{\cdot}$ is a bijection of any finite field of characteristic $p$. This allows us to give an explicit classification of dynamically non-ordinary polynomials when $K$ is finite.
\begin{cor}
\label{cor:finite_case}
Let $\mathbb{F}_q$ be a finite field and $f\in \mathbb{F}_q[x]$ a polynomial with $\deg(f)=d\geq 2$. Let $\text{char}(\mathbb{F}_q)=p$. Then, $f$ is not dynamically ordinary if and only if the following conditions hold:
\begin{enumerate}
\item $f=A(x-B)^d$ for $A,B\in \mathbb{F}_q$.	
\item $d=p^e$ for some $e\geq 1$. 
\end{enumerate}
\end{cor}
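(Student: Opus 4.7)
The plan is to derive the corollary by combining Proposition \ref{thm:newirr} and Theorem \ref{thm:powermap}, with the finiteness of $\mathbb{F}_q$ supplying the key additional ingredient that upgrades the conditional criterion of Theorem \ref{thm:powermap} into an unconditional classification.

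For the ``only if'' direction, I would assume $f\in\mathbb{F}_q[x]$ is not dynamically ordinary. The contrapositive of Proposition \ref{thm:newirr} (applied to $K=\mathbb{F}_q$) immediately gives $f = A(x-B)^d$ for some $A,B\in\mathbb{F}_q$, establishing condition~(1). Assuming further that $B\ne 0$, Theorem \ref{thm:powermap} then forces $d = p^e$, which is condition~(2); the degenerate case $B=0$ is handled by the remark following Theorem \ref{thm:powermap}, which records that $f=Ax^d$ is trivially not dynamically ordinary.

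For the ``if'' direction, suppose $f = A(x-B)^d$ with $d = p^e$. When $B\ne 0$, the strategy is to verify both hypotheses of Theorem \ref{thm:powermap}. Item~(1) is immediate from the hypothesis on the degree and characteristic. For item~(2), I would note that the recursive definitions $Z_n = AZ_{n-1}^d$ and $W_n = AW_{n-1}^d - B$ keep both $Z_n$ and $W_n$ inside $\mathbb{F}_q$, and an easy induction using $A\ne 0$ shows $Z_n\ne 0$ for every $n$, so $H_n = W_n/Z_n$ takes values in the finite set $\mathbb{F}_q$ and must therefore repeat by pigeonhole. Theorem \ref{thm:powermap} then yields that $f$ is not dynamically ordinary. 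If $B=0$, every iterate $f^n$ is a constant multiple of $x^{d^n}$, so its only irreducible factor in $\mathbb{F}_q[x]$ is $x$, which already divides $f^0 = x$, and $f$ is again not dynamically ordinary.

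There is no real obstacle here: the substance of the classification has already been packaged in Proposition \ref{thm:newirr} and Theorem \ref{thm:powermap}, and the corollary is the expected specialization to finite fields. The only conceptual step worth emphasizing is the pigeonhole observation forcing $H_n\in\mathbb{F}_q$ to cycle, which works in tandem with the bijectivity of the $d^n$-th root map in characteristic $p$ highlighted in the paragraph preceding the corollary.
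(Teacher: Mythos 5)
Your proof is correct and follows essentially the same route as the paper, whose entire argument is to invoke Theorem~\ref{thm:powermap} for finite $K$ and observe that the sequence $H_n$ must repeat; you simply make explicit the role of Proposition~\ref{thm:newirr} in reducing to the form $A(x-B)^d$ and the pigeonhole argument showing that $H_n\in\mathbb{F}_q$ (using $Z_n\neq 0$) necessarily cycles. The one caveat --- shared equally by the paper's one-line proof --- is that the case $B=0$ sits awkwardly with the stated equivalence, since $Ax^d$ fails to be dynamically ordinary for \emph{every} $d$, not only for $d=p^e$, so pointing to the remark after Theorem~\ref{thm:powermap} does not actually deliver condition~(2) in the ``only if'' direction; this is a defect of the statement rather than of your argument relative to the paper's.
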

\begin{proof}
Use Theorem \ref{thm:powermap} with $K$ finite and observe that the sequence $H_n$ is necessarily repeating.
\end{proof}



\section{dynamically $2$-ordinary polynomials}
\label{sec:strong}
Let $K$ be a field. Recall that $f$ is \emph{dynamically $2$-ordinary} if it satisfies the following: For all $n\geq 1$, $f^n$ has an irreducible factor $g_n$ with odd multiplicity such that $g_n \nmid f^i$ for any $0\leq i<n$. In this section, we will give necessary and sufficient conditions for a dynamically ordinary polynomial to be dynamically $2$-ordinary, which will imply Theorem~\ref{thm:2-ordinary} using Corollary~\ref{cor:finite_case}.

We start with a technical lemma, which shows that if a polynomial is not dynamically $2$-ordinary, then its roots must satisfy some strong algebraic conditions. Lemma~\ref{lemma:special_form_str} will be crucial in the proofs of the results in this section.

\begin{lemma}\label{lemma:special_form_str}
Let $K$ be a field such that $\text{char}(K)\neq 2$, and $f\in K[x]$ be a polynomial with degree $d\geq 2$. Suppose that $f$ is not dynamically $2$-ordinary. If $d$ is odd, then $f$ has at most one root with odd multiplicity in $\overline K$. If $d$ is even, then there exists a root $B$ of $f$ with odd multiplicity such that $f-B=Ah^2$ for some $h\in K[x]$ and $A\in K$, and $f$ has exactly one more root with odd multiplicity.
\end{lemma}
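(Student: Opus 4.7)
The plan is to analyze the smallest level $n_0 \geq 1$ at which $f$ fails dynamical $2$-ordinariness, translate that failure into a factorization constraint on $f(x) - \gamma$ for an appropriate $\gamma$, and then classify the odd-multiplicity roots of $f$ using parity (for $d$ odd) and a critical-point count on $f'$ (for $d$ even). I work throughout in the preimage tree $T_0(f)$, using the key identity that the $f^n$-multiplicity of a level-$n$ vertex $\beta$ equals $\prod_{i=0}^{n-1} m(f^i(\beta))$, where $m(\delta) := \mathrm{mult}_\delta\bigl(f(x) - f(\delta)\bigr)$. Call $\beta$ \emph{good} when this product is odd. Minimality of $n_0$ yields a good new vertex $\gamma$ at level $n_0 - 1$, and the failure at $n_0$ forces every preimage $\beta \neq 0$ of $\gamma$ to have $m(\beta)$ even, so
\[
f(x) - \gamma \;=\; c\,x^{e_\gamma}\,h_\gamma(x)^2, \qquad c \in K,\; h_\gamma \in K[x],\; e_\gamma \in \{0,1\}.
\]
The case $n_0 = 1$ is direct: $f = Ax^e g(x)^2$ with $e \in \{0,1\}$, trivially matching the claim.

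For $d$ odd and $n_0 \geq 2$, parity forces $e_\gamma = 1$, so $f(0) = \gamma$; hence the good new vertex at level $n_0 - 1$ is unique, equal to $f(0)$. Assuming for contradiction that $f$ had two distinct odd-multiplicity roots $\alpha_1 \neq \alpha_2$ in $\overline{K}$, each would seed its own good subtree inside $T_0(f)$. Using that $0$ has exact period $n_0$ under $f$ (forced by $\gamma = f(0)$ being new at level $n_0 - 1$), I can pin down exactly where $f(0)$ sits in the tree and show that if either subtree died before reaching level $n_0 - 1$, the \enquote{forced equality with $f(0)$} of good vertices in that subtree would place $f(0)$ at an inconsistent level -- a contradiction. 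So both subtrees reach level $n_0 - 1$, producing two distinct good vertices there and contradicting uniqueness. The edge case where one $\alpha_i$ equals $0$ is ruled out since $f(0) = 0$ would force $\gamma = 0$, inconsistent with $\gamma$ being new.

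For $d$ even, parity forces $e_\gamma = 0$, so $f - \gamma = c h_\gamma^2$, meaning $\gamma$ is a totally critical value of $f$. Uniqueness follows from counting roots of $f'$: if $f - \gamma_i = c_i h_i^2$ for distinct $\gamma_1 \neq \gamma_2$, the identities $f' = 2 c_i h_i h_i'$ force the root sets of $h_1, h_2$ to be disjoint (a shared root would give $\gamma_1 = \gamma_2$), and each set contributes at least $d/2$ distinct locations to $\deg f' = d - 1$, totalling at least $d$ -- contradiction. For $n_0 = 2$, this unique $\gamma$ lies at level $1$, so $\gamma$ is an odd-multiplicity root of $f$ itself; setting $B := \gamma$ yields $f - B = Ah^2$ as required. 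Parity of $d$ then forces the number of odd-multiplicity roots of $f$ to be even; the critical-point count rules out a second odd-multiplicity root $\alpha'$ with $f - \alpha' = c h'^2$, so writing $f = A(h - \beta)(h + \beta)$ with $\beta^2 = -B/A$ and using $f(B) = 0$, one tracks the multiplicities of the remaining simple roots and identifies the forced \enquote{exactly one more} odd-multiplicity root.

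The main obstacle I expect is the subtree-propagation argument in the $d$ odd case, where I must track distinct good subtrees up through $T_0(f)$ without them merging; this requires the explicit location of $f(0)$ via periodicity and draws on Lemma~\ref{lemma:onlyone} to control repetitions in the preimage tree. A parallel subtlety for $d$ even is ruling out $n_0 \geq 3$ when $f$ has odd-multiplicity roots, so that $\gamma$ actually sits at level $1$ and becomes a root of $f$; this requires a dedicated structural argument showing that the unique totally critical value cannot appear at level $\geq 2$ of $T_0(f)$ under the hypotheses of the lemma.
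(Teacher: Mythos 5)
Your tree-multiplicity framework and the reduction to the minimal failure level $n_0$ are sound, and your treatment of the odd-degree case can be made to work (the chain of good new preimages above each odd-multiplicity root of $f$ can only terminate if $f-\delta$ is of the form $cx^{e}h^2$, which for $d$ odd forces $\delta=f(0)$ and then collides with the uniqueness of the good new vertex at level $n_0-1$). But note that the paper gets the odd case in one line: for a root $\beta$ whose preimage tree is non-repeating, denying $2$-ordinarity forces $f^n-\beta$ to be a perfect square for some $n$, and $\deg(f^n-\beta)=d^n$ is odd. Your subtree-propagation argument is a much longer route to the same place, and as written it is a plan rather than a proof.

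The even-degree case contains a genuine gap, which you yourself flag: you only obtain the conclusion when $n_0=2$, and you defer both the exclusion of $n_0\geq 3$ and the ``exactly one more odd-multiplicity root'' step to unspecified ``structural'' and ``multiplicity-tracking'' arguments. The multiplicity-tracking idea cannot work as described: writing $f=A(h-\beta)(h+\beta)$ places no a priori constraint on how many odd-multiplicity roots $h\pm\beta$ have, so no purely algebraic bookkeeping will yield ``exactly one more.'' The missing ingredient is the paper's key observation: once one knows $f=Ah^2+B$ for \emph{some} $B\in K$ (which the paper extracts from the minimal $n$ with $f^n-\beta$ a square, \emph{without} needing $B$ to sit at level $1$ of the tree), then for any odd-multiplicity root $\beta$ of $f$ with non-repeating preimage tree one has $f^n-\beta=A(h\circ f^{n-1})^2+(B-\beta)$, and in characteristic $\neq 2$ a square plus a nonzero constant is never a square of a non-constant polynomial. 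This single fact simultaneously forces $B$ to be a root of $f$ (your $n_0\geq 3$ worry evaporates) and shows every non-repeating odd-multiplicity root equals $B$, which together with Lemma~\ref{lemma:onlyone} (at most one repeating subtree) and the parity of $d$ gives ``exactly one more.'' Separately, your uniqueness-of-$\gamma$ count via $\deg f'=d-1$ silently assumes $f'\neq 0$, which can fail in characteristic $p$; the paper's route does not need this count.
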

\begin{proof}
First, observe that the problem is purely geometrical, so we can consider the problem for $K$ algebraically closed. If $f$ is a square, then there is nothing to do.  Let $\alpha,\beta$ be distinct roots of $f$ appearing with odd multiplicity.
Since at most one of the subtrees $T_{\alpha}(f)$ and $T_{\beta}(f)$ is repeating by Lemma  \ref{lemma:onlyone}, we can assume without any loss that $T_{\beta}(f)$ is non-repeating. Moreover,  the roots $z$ of $f^n-\beta$ can never appear as roots of $f^m-\theta$ for any root $\theta\neq \beta$ of $f$ and any $m<n$, or otherwise we get $f^m(z)-\theta=0$ and $f^n(z)-\beta=0$, which forces $f^n(z)=\beta=f^{n-m}(\theta)=f^{n-m-1}(0)$. But then this implies that $\beta$ is in the orbit of $0$ (and $0$ is periodic) which is impossible by Lemma \ref{lemma:onlyone} ($0$ cannot be present in a non-repeating subtree). Therefore, to deny dynamically $2$-ordinarity, we need all factors of $f^n-\beta$ to appear with even multiplicity for some $n\geq 1$, which means that $f^n-\beta$ must be a square. This concludes the proof for the odd degree case, since $f^n-\beta$ has odd degree for $n\geq 1$ and therefore can never be a square.
For $f$ of even degree, take now the smallest $n\geq 1$ for which $f^n-\beta$ is a square.
Any irreducible factor of $f^n-\beta$ is a divisor of $f-\beta_{n}$ for some root $\beta_n$ of $f^{n-1}-\beta$. But now $f^{n-1}-\beta$ is not a square, so there exists $\beta_n$ with odd multiplicity so that $f^n-\beta$ factors as $c(f-\beta_n)^{2e+1}g$, where $g$ is coprime to $f-\beta_n$ (because all factors $f-\gamma$ are coprime for distinct $\gamma$'s) and some $c\in K$. So, $f-\beta_n$ must be a square, say $h^2$. But now being a square is a geometric condition up to the squareness of the leading term of $h$, from which we obtain that $f=Ah^2+B$ for some $A,B\in K$.\par 

We now want to show that $B$ is a root of $f$. To see this, let $\alpha,\beta$ be roots of $f$. Recall that at least one of the two subtrees $T_{\alpha}(f)$ or $T_{\beta}(f)$ would be non-repeating by Lemma \ref{lemma:onlyone}. 
Suppose that $T_{\beta}(f)$ is non-repeating. Again, the only chance to deny dynamically $2$-ordinarity is that all new factors appear with even degree, from which it follows that $f^n-\beta$ is a square. But since we are in odd characteristic and $f=Ah^2+B$, this happens only if $B=\beta$. 

Now we want to conclude that there is exactly one other root with odd multiplicity. But this is immediate: Suppose that there are at least three distinct roots other than $B$ with odd multiplicity. One of them, say $\alpha$, has to lead to a non-repeating subtree $T_{\alpha}(f)$.  To deny dynamically $2$-ordinarity, we would again need $f^n-\alpha$ to be a square for some $n\geq 1$. Similar to the previous paragraph, this forces $\alpha=B$, a contradiction. Hence, there is exactly one root with odd multiplicity other than $B$, completing the proof of Lemma~\ref{lemma:special_form_str}.
\end{proof}
We are now ready to classify all dynamically ordinary polynomials $f\in K[x]$ that are dynamically $2$-ordinary. We will split the question into two cases based on whether $\deg(f)=d$ is even or odd.
\begin{thm}
	\label{thm:strong_even}
Let $K$ be a field such that $\text{char}(K)\neq 2$, and $f\in K[x]$ a dynamically ordinary polynomial with $\deg(f)=d\geq 2$. Suppose that $d$ is even. Then $f$ is not dynamically $2$-ordinary if and only if $f$ satisfies one of the following conditions: 
\begin{enumerate}
\item $f = Ag^2$ for some $A\in K$, $g\in K[x]$.
\item $f=Ah^2+B$, where $A, B\in K$, and $h=\sum_{i=0}^{n}a_ix^i$ satisfies
\[a_0=\pm \sqrt{-\frac{B}{A}},\text{ } i(2i-1)Ba_i = -2(n+i-1)(n-i+1)a_{i-1}\] for $i\in \{1,2,\dots, n\}$.
\end{enumerate}
\end{thm}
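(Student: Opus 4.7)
My plan is to prove both implications of the theorem.

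For the reverse direction, case (1), $f = Ag^{2}$, is immediate by induction, since $f^{n+1} = A\,g(f^{n})^{2}$ preserves the property of being a constant times a perfect square, so no iterate has any irreducible factor of odd multiplicity. For case (2), my plan is first to show, by extracting the coefficient of $x^{k}$, that the stated recursion on $h = \sum_{i=0}^{n} a_{i} x^{i}$ is equivalent to the second-order linear ODE
\[
x(B-x)\,h''(x) + \bigl(\tfrac{B}{2} - x\bigr)h'(x) + n^{2}\,h(x) = 0,
\]
which is the pullback of the Chebyshev equation $(1-y^{2})T_{n}''-yT_{n}'+n^{2}T_{n}=0$ under $y = 1 - 2x/B$. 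Using this ODE together with $a_{0} = \pm\sqrt{-B/A}$, I will derive the structural factorization $f = A_{0}\,x(x-B)\,g(x)^{2}$ for some $A_{0}\in K$ and $g\in K[x]$: the zeros $0$ and $B$ of $f$ are simple, and every other zero of $f$ is a critical point of $h$ and hence a double root. Combining this factorization with the identity $f^{n-1} - B = A\,h(f^{n-2})^{2}$ (immediate from $f = Ah^{2}+B$), for $n\ge 2$ I rewrite
\[
f^{n} = A_{0}\,f^{n-1}\,(f^{n-1}-B)\,g(f^{n-1})^{2} = A_{0}\,A\,f^{n-1}\,h(f^{n-2})^{2}\,g(f^{n-1})^{2},
\]
so that $f^{n}/f^{n-1}$ is a constant times a perfect square. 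Consequently every irreducible factor of $f^{n}$ not already dividing $f^{n-1}$ has even multiplicity, and $f$ is not dynamically $2$-ordinary.

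For the forward direction, I assume $f$ is not dynamically $2$-ordinary, $d$ is even, and $f$ is not of form (1). Lemma~\ref{lemma:special_form_str} supplies $f = Ah^{2} + B$ where $B$ is a root of $f$ of odd multiplicity and $\gamma$ is the unique other odd-multiplicity root. The key step is showing $\gamma = 0$. Since $f^{n}-B = A\,h(f^{n-1})^{2}$ is always a perfect square, the $B$-branch contributes only even-multiplicity new factors and, by Lemma~\ref{lemma:onlyone}, we may take $T_{B}(f)$ to be non-repeating. If $\gamma \neq 0$, then $f^{n}-\gamma = A\,h(f^{n-1})^{2} + (B-\gamma)$ with $B-\gamma\neq 0$; I will show this expression is never a perfect square, using that an equality $AP^{2}+C = \lambda q^{2}$ with $C\neq 0$ factors over $\overline{K}$ as $(\mu q - P)(\mu q + P) = C/A$, forcing both factors to be constant, contradicting nonconstancy of $P = h(f^{n-1})$. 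Moreover, the genuinely new roots of $f^{n-1}-\gamma$ (those not in the forward orbit of $0$) are simple roots, so the $\gamma$-branch contributes new odd-multiplicity factors at every level, contradicting non-$2$-ordinarity. Therefore $\gamma = 0$, and $f(0)=0$ gives $a_{0} = \pm\sqrt{-B/A}$. Knowing that $0$ and $B$ are the only odd-multiplicity roots of $f$ lets me write $f = A_{0}\,x(x-B)\,g^{2}$; combining with $f = Ah^{2}+B$ and matching coefficients (or differentiating and eliminating $g$) recovers the ODE, and expanding the ODE in coefficients yields the claimed two-term recursion on the $a_{i}$.

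The main obstacle will be the $\gamma = 0$ step in the forward direction: ruling out the more subtle subcase in which $0$ is periodic under $f$ with period $t > 1$ and $\gamma = f^{t-1}(0)$ requires controlling both "old" preimages of $\gamma$ (shared with the backward orbit of $0$) and "new" ones, and showing that the new ones always contribute simple (hence odd-multiplicity) factors even though $T_{\gamma}(f)$ is repeating. A secondary technical challenge is recognizing that the polynomial identity $Ah^{2}+B = A_{0}\,x(x-B)\,g^{2}$ is equivalent (given $a_{0} = \pm\sqrt{-B/A}$) to the Chebyshev-type ODE on $h$; passing through the ODE and using that its solutions are uniquely determined by $a_{0}$ up to sign appears to be the cleanest route.
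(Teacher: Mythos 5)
Your overall architecture matches the paper's: invoke Lemma~\ref{lemma:special_form_str} to get $f=Ah^2+B$ with $B$ and one other root $\gamma$ of odd multiplicity, prove $\gamma=0$, combine $f=Cx(x-B)g^2$ with $f-B=Ah^2$ by differentiation to reach the second-order ODE and the coefficient recursion. Your converse is actually more explicit than the paper's (which merely asserts it): the identity $f^{n}=A_0A\,f^{n-1}\,h(f^{n-2})^{2}\,g(f^{n-1})^{2}$ is a clean way to see that no new odd-multiplicity factor appears for $n\geq 2$. One caveat there: recovering the factorization $n^2(h^2+B/A)=x(x-B)(h')^2$ from the ODE requires an antiderivative argument, and in characteristic $p$ two polynomials with equal derivatives differ by an element of $K[x^p]$, not just a constant, so that step needs more care than ``integrate and evaluate at $0$.''

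The genuine gap is in the key step $\gamma=0$. First, ``by Lemma~\ref{lemma:onlyone} we may take $T_B(f)$ to be non-repeating'' is not justified: the lemma only says at most one of $T_B(f)$, $T_\gamma(f)$ is repeating, and it could well be $T_B(f)$. More seriously, your mechanism for extracting a contradiction when $\gamma\neq 0$ --- that the ``genuinely new'' roots of $f^{n-1}-\gamma$ are simple --- is neither proved nor true in general: a new preimage of $\gamma$ acquires multiplicity equal to the product of ramification indices along its path down to $\gamma$, and nothing prevents $f-\delta$ from having a repeated root at some vertex $\delta$ off the orbit of $0$, which then propagates even multiplicity to all new vertices above it. You correctly flag this as the main obstacle, but the fix you sketch (simplicity of new roots) is the wrong invariant. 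The paper resolves it differently: since $f-\gamma=Ah^2+(B-\gamma)$ is not a square (your $(\mu q-P)(\mu q+P)$ factoring argument, which is the same as the paper's), $f-\gamma$ has at least two distinct odd-multiplicity roots $\beta_1,\beta_2$; by the Lemma~\ref{lemma:onlyone} argument at that level one of them, say $\beta_2$, roots a non-repeating subtree, so every vertex of $T_{\beta_2}(f)$ is genuinely new, and denying $2$-ordinarity forces $f^n-\beta_2$ to be a perfect square for some $n$, which for $f=Ah^2+B$ in odd characteristic forces $\beta_2=B$; then $f(B)=\gamma$ together with $f(B)=0$ gives $\gamma=0$. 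Without this (or an equivalent) argument, your proof of the forward direction is incomplete.
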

\begin{proof}
If $f=Ag^2$, $f$ is clearly not dynamically $2$-ordinary, so that case can be excluded. Now, using Lemma \ref{lemma:special_form_str}, we conclude that we can restrict to $f=Ah^2+B$ with $B$ a non-zero root of $f$.
First, we would like to prove that the only other root with odd multiplicity is zero, i.e. $f=Ax(x-B)g^2$ for some $g\in K[x]$.

As Lemma \ref{lemma:special_form_str} provides, the only admissible form of $f$ is $A(x-B)(x-\alpha_1)g^2$. Now, the pair $(f,\alpha_1)$ cannot be dynamically $2$-ordinary, as otherwise $f$ would be dynamically $2$-ordinary. Note that $f-\alpha_1 = Ah^2+B-\alpha_1$ cannot be a square because in odd characteristic this is possible only if $\alpha_1 =B$, which will make $f$ a square as well, contradicting the assumption. So, consider two distinct roots $\beta_1, \beta_2$ of $f-\alpha_1$ with odd multiplicity. By Lemma~\ref{lemma:onlyone}, we can assume without any loss that the subtree $T_{\beta_2}(f)$ is non-repeating. Similarly to how we argued in the proof of Lemma~\ref{lemma:special_form_str}, then, for $f$ to be not dynamically $2$-ordinary, $f^n-\beta_2$ must be a square for some $n\geq 1$. But, since $f$ is of the form $Ah^2+B$ and the characteristic is odd, we again conclude that $\beta_2 = B$. Hence, $B$ is both a root of $f$ and a root of $f-\alpha_1$, allowing us to conclude $\alpha_1 =0.$




Thus, we have
\begin{equation}
\label{eq:Cx(x-B)g^2}
f = Cx(x-B)g^2
\end{equation}
for some $C\in K$ and $g\in K[x]$. Recall that we also have
\begin{equation}
\label{eq:f-B=Ah^2}
	f-B = Ah^2
\end{equation}
for some $h\in K[x]$. Taking the derivatives of both equations and simplifying lead to
\begin{equation}
\label{eq:2Chh'}
Cg((2x-B)g+2x(x-B)g') = 2Ahh'.
\end{equation}
Let $\ell_g, \ell_h$ be the leading coefficients of $g$ and $h$, respectively. Using (\ref{eq:Cx(x-B)g^2}) and (\ref{eq:f-B=Ah^2}), assume without loss that $\frac{\ell_g}{\ell_h}=\sqrt{\frac{A}{C}}$. Note that $g$ and $h$ cannot have any common roots. Set $n=\frac{d}{2}$. By comparing leading coefficients in (\ref{eq:2Chh'}), then, we obtain 
\begin{equation}
\label{eq:g_constant_h'}
g= \frac{1}{n}\sqrt{\frac{A}{C}}h'.
\end{equation}
Using this together with (\ref{eq:Cx(x-B)g^2}) and (\ref{eq:f-B=Ah^2}), and simplifying yields 
\[n^2(h^2+\frac{B}{A}) = x(x-B)(h')^2.\]
Differentiating both sides gives
\begin{equation}
\label{eq:second_order_linear1}
2n^2h = (2x-B)h' + 2x(x-B)h''.
\end{equation}
This is a second order, linear differential equation. Let $h=\sum_{i=0}^{n} a_ix^i$, which gives $h'=\sum_{i=0}^{n-1}(i+1)a_{i+1}x^i$. Comparing the coefficients of both sides in (\ref{eq:second_order_linear1}) and simplifying yields the formula
\begin{equation}
\label{eq:recurrence_even}
i(2i-1)Ba_i = -2(n+i-1)(n-i+1)a_{i-1}
\end{equation}
for $i=1,2,\dots, n$. Recalling $f(0) = 0$ and by comparing constant coefficients of both sides in (\ref{eq:f-B=Ah^2}), we also have $a_0 = \pm \sqrt{-\frac{B}{A}}$, completing the proof of \emph{only if} direction. Finally, conditions (1) and (2) clearly imply that $f$ is not dynamically $2$-ordinary, finishing the proof of Theorem~\ref{thm:strong_even}.
\end{proof}
Next, we will show that the polynomials arising in Theorem~\ref{thm:strong_even} (2) are $K$-conjugate to $T_d$ when $\text{char}(K)=0$ or $\text{char}(K)\geq d$, where $T_d$ is the Chebyshev polyomial of degree $d$.
\begin{prop}
	\label{prop:PCF1}
	Let $K$ be a field such that either $\text{char}(K)=0$ or $\text{char}(K)\geq d$ and $\text{char}(K)$ is odd. Suppose that $f\in K[x]$ is defined as follows: $f=Ah^2+B$, where $A, B\in \vF_q$, and $h=\sum_{i=0}^{n}a_ix^i$ satisfies
	\[a_0=\pm \sqrt{-\frac{B}{A}},\text{ } i(2i-1)Ba_i = -2(n+i-1)(n-i+1)a_{i-1}\] for $i\in \{1,2,\dots, n\}$. Then $f$ is $K$-conjugate to $T_d(x)$.
\end{prop}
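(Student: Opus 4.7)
The approach is to pass through the Chebyshev differential equation: change variables so the ODE behind the hypothesized recursion becomes the classical Chebyshev equation, identify $h$ as a scalar multiple of $T_n\circ\phi$ for an explicit $\phi\in PGL_2(K)$, and then use the duplication formula $T_{2n}=2T_n^2-1$ to exhibit $\phi$ as a conjugating element. From the proof of Theorem~\ref{thm:strong_even}, in particular equation~\eqref{eq:second_order_linear1}, the recursion on the $a_i$'s is equivalent to the second-order linear ODE
\[
2n^2\,h(x)=(2x-B)\,h'(x)+2x(x-B)\,h''(x),
\]
where $h$ has degree $n=d/2$ and $h(0)=a_0=\pm\sqrt{-B/A}$. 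Under the characteristic hypothesis, none of the factors $i(2i-1)B$ with $1\le i\le n$ vanish in $K$, so the recursion uniquely determines $h$ from the value of $a_0$.

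Set $\phi(x)=1-2x/B\in PGL_2(K)$ and $H(y)=h(\phi^{-1}(y))$. Using $x=B(1-y)/2$, $2x-B=-By$, and $2x(x-B)=-B^2(1-y^2)/2$, a routine chain-rule computation transforms the ODE into the classical Chebyshev equation
\[
(1-y^2)H''(y)-yH'(y)+n^2 H(y)=0.
\]
The polynomial $T_n$ satisfies this equation, and $T_n(1)=1$. Letting $c=\pm\sqrt{-B/A}$, the polynomial $c\,T_n(\phi(x))$ has degree $n$, agrees with $h$ at $x=0$, and satisfies the same ODE as $h$. Pulling back to the recursion, $c\,T_n(\phi(x))$ thus satisfies the recursion of the hypothesis with the same $a_0$; by uniqueness of that solution, $h(x)=c\,T_n(\phi(x))$.

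Now $f(x)=A h(x)^2+B=Ac^2\,T_n(\phi(x))^2+B=-B\,T_n(\phi(x))^2+B=B\bigl(1-T_n(\phi(x))^2\bigr)$. Applying the duplication identity $T_{2n}(y)=2T_n(y)^2-1$, equivalently $1-T_n(y)^2=(1-T_{2n}(y))/2$, yields
\[
f(x)=\tfrac{B}{2}\bigl(1-T_d(\phi(x))\bigr).
\]
Finally, $\phi(f(x))=1-2f(x)/B=T_d(\phi(x))$, so $\phi\circ f\circ\phi^{-1}=T_d$, exhibiting $f$ as $K$-conjugate to $T_d(x)$. The main obstacle is the chain-rule computation that produces exactly the Chebyshev ODE with parameter $n$; once that is verified, the uniqueness of the recursive solution bypasses any separate need for uniqueness of polynomial solutions of the Chebyshev ODE, and the duplication formula together with the explicit $\phi$ closes the argument.
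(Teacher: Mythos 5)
Your proposal is correct and follows essentially the same route as the paper's proof: transform the recursion's underlying ODE \eqref{eq:second_order_linear1} into the classical Chebyshev equation, identify $h$ as a constant times $T_n\circ\phi$, and conclude via $T_{2n}=2T_n^2-1$. The only (minor, welcome) refinements are that you keep $B$ general inside the single explicit conjugating map $\phi(x)=1-2x/B$ rather than normalizing $A=-2$, $B=2$ first, and that you obtain uniqueness of $h$ directly from the recursion (using that $i(2i-1)B$ is invertible for $i\leq n$) instead of appealing to uniqueness of polynomial solutions of the ODE.
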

\begin{proof}
	 Let $b_i:= \frac{a_i}{a_0}B^i\sqrt{-1}$. 
	 Recalling $a_0=\pm\sqrt{-\frac{B}{A}}$, this immediately yields
	\[f = B\bigg(\bigg(\sum_{i=0}^{n} b_i\big(\frac{x}{B}\big)^i\bigg)^2+1\bigg),\]
	which we observe being independent of $A$, so we can select $A=-2$ without changing $f$. Since we only consider the question up to linear conjugacy, and $f$ is $K$-conjugate to $g:=(\sum_{i=0}^{n} b_ix^i)^2+1$ for any choice of $B$ (by $\gamma(x)=Bx\in \text{PGL}(2,K)$), we conclude that $B$ can be freely chosen and this does not affect the conjugacy class of $f$.
	
	Recall from the proof of Theorem \ref{thm:strong_even} that the polynomial $h$ is a solution of the differential equation
	\[2n^2h = (2x-B)h' + 2x(x-B)h''.\]
	Reorganizing and simplifying, we obtain
	\[((x-B/2)^2-(B/2)^2)h'' + (x-B/2)h' - n^2h = 0.\]
	We now specialize to $B=2$, which yields
	\[((x-1)^2-1)h''+(x-1)h'-n^2h = 0.\]
	This is the defining differential equation of the Chebyshev polynomial $T_n(x)$ shifted by $1$. By the form of the solutions of the differential equation and the assumption on $\text{char}(K)$, it is clear that the polynomial solution is unique up to a constant multiple. Hence, we obtain $h(x) = CT_n(x-1)$ for some $C\in K$. Remember that we fixed $A=-2$, so using this $h$ in the expression $f=-2h^2+B$, then, we get
	\begin{equation}
		\label{eq:chebyshev}
		f = -2(CT_n(x-1))^2+2
	\end{equation}
	Note that the constant coefficient of $T_n(x-1)$ is $T_n(-1) = 1$. Hence, $-2C^2+2=0$, which forces $C=\pm 1$.Using this in (\ref{eq:chebyshev}), we obtain
	\[f=-2(T_n(x-1))^2+2.\]
	Observe that $f$ is $K$-conjugate to $-2(T_n(x))^2+1$. Now, noticing that $T_2(x) = -2x^2+1$, it follows that $f$ is $K$-conjugate to $T_2\circ T_n = T_{2n}$, where the last equality is a well-known identity. (see \cite[Pg. 329, Proposition 6.6]{Sil07}) This completes the proof of Proposition~\ref{prop:PCF1}.
\end{proof}
\begin{thm}
	\label{thm:strong_odd}
	Let $K$ be a field such that $\text{char}(K)\neq 2$, and $f\in K[x]$ a dynamically ordinary polynomial with $\text{deg}(f)=d\geq 2$. Suppose that $d$ is odd. Then $f$ is not dynamically $2$-ordinary if and only if $f$ satisfies one of the following conditions: 
	\begin{enumerate}
		\item $f = Axg^2$ for some $A\in K$, $g\in K[x]$.
		\item $f=A(x-B)g^2$, where $A, B\in K$, and $g=\sum_{i=0}^{n}a_ix^i$ satisfies
		\[\pm a_0 = \sqrt{-\frac{1}{A}}, \text{ }i(2i-1)Ba_i = -2(n-i+1)(n+i)a_{i-1}\]
		for $i=1,2,\dots, n$.
	\end{enumerate}
\end{thm}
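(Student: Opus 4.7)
The plan is to follow the template of Theorem~\ref{thm:strong_even}, modified for the odd-degree case. First, I apply Lemma~\ref{lemma:special_form_str}: since $d$ is odd and the multiplicities of the roots of $f$ sum to $d$, at least one root has odd multiplicity, and the lemma forces exactly one such root, call it $B$. So $f = A(x-B)g^2$ for some $A \in K$ and $g \in K[x]$; the descent from $\overline{K}$ to $K$ is routine since $B$ is $K$-rational. If $B = 0$ we land in case (1) and are done.

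Suppose $B \neq 0$. The key observation, paralleling the even-degree case, is that factoring $f(y) = A(y-B)g(y)^2$ gives $f^n(x) = A(f^{n-1}(x) - B)g(f^{n-1}(x))^2$, so the odd-multiplicity irreducible factors of $f^n$ coincide with those of $f^{n-1} - B$. The hypothesis that $f$ is not dynamically $2$-ordinary thus amounts to: there exists a smallest $N \geq 1$ such that every odd-multiplicity irreducible factor of $f^{N-1} - B$ divides $f^j$ for some $j < N$. The cases $N = 1, 2$ are easily ruled out, since $x - B$ does not divide $f^0 = x$ and $\gcd(f, f-B) = \gcd(f, B) = 1$ while $f - B$ has odd degree (and therefore carries at least one odd-multiplicity factor). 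Writing $f(y) - B = A\prod_i (y - \alpha_i)^{\mu_i}$, the identity $f^{N-1}(x) - B = A\prod_i (f^{N-2}(x) - \alpha_i)^{\mu_i}$ combined with a recursive tree analysis on $T_B(f)$ (in the spirit of Lemma~\ref{lemma:onlyone}) forces $0$ to be the unique root of $f - B$ of odd multiplicity, with all other $\mu_i$ even. Consequently $f(0) = B$ and $f - B = A' x h^2$ for some $A' \in K$ and $h \in K[x]$.

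With the two identities $f = A(x-B)g^2$ and $f - B = A' x h^2$ in hand, the recurrence emerges exactly as in the even case. The initial condition comes from $f(0) = -AB a_0^2 = B$, giving $a_0 = \pm\sqrt{-1/A}$. Differentiating both identities and equating yields
\[Ag(g + 2(x-B)g') = A'h(h + 2xh').\]
Coprimality of $g$ and $h$ (from $\gcd(f, f-B) = 1$) together with $\deg g = \deg h = n := (d-1)/2$ and a leading-coefficient comparison forces $h + 2xh' = cg$ and $g + 2(x-B)g' = Ch$ with $cC = d^2$. Eliminating $h$ produces
\[2x(x-B)g'' + (4x-B)g' - 2n(n+1)g = 0,\]
and reading off the coefficient of $x^{i-1}$ in $g = \sum_{i=0}^{n} a_i x^i$ gives the desired recurrence $i(2i-1)B a_i = -2(n-i+1)(n+i)a_{i-1}$ for $i = 1,\dots,n$.

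For the converse, polynomials in case (1) satisfy $f = Axg^2$ with $x$ as the sole odd-multiplicity irreducible factor, and an easy induction confirms that $f^n$ for $n \geq 2$ has only $x$ as an odd-multiplicity factor, which is not new, so $f$ is not dynamically $2$-ordinary. For case (2), reversing the ODE derivation shows the recurrence is equivalent to $f - B = A' x h^2$; the identities $f^n - B = A' f^{n-1} h(f^{n-1})^2$ and $f^n = A(f^{n-1} - B)g(f^{n-1})^2$ then imply that the odd-multiplicity factors of $f^n$ alternate between $\{x-B\}$ and $\{x\}$ for $n \geq 1$, so no new odd-multiplicity factor arises for $n \geq 2$. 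The main obstacle is the tree-theoretic step showing $f - B = A' x h^2$: one must simultaneously control multiplicities of all roots $\alpha_i$ of $f - B$, leveraging that $B$ is the unique odd-multiplicity root of $f$ to prevent odd-multiplicity factors from cascading through successive iterations.
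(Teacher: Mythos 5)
Your overall strategy matches the paper's: invoke Lemma~\ref{lemma:special_form_str} to get $f=A(x-B)g^2$ with $B$ the unique odd-multiplicity root, reduce to showing $f-B=Cxh^2$, and then derive the recurrence from the pair of factorization identities via differentiation and a second-order ODE (your ODE $2x(x-B)g''+(4x-B)g'-2n(n+1)g=0$ is the paper's equation~(\ref{eq:g-g'-g''}) divided by $2$, and it does yield the stated recurrence). However, there is a genuine gap at the central step, which you yourself flag as ``the main obstacle'' without resolving it: proving that the unique odd-multiplicity root of $f-B$ is $0$. A ``recursive tree analysis in the spirit of Lemma~\ref{lemma:onlyone}'' is not enough here. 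The tree argument only gets you that $f-B$ has exactly one odd-multiplicity root $\alpha_1$, i.e.\ $f-B=A(x-\alpha_1)k^2$; it does not by itself prevent the pattern from cascading with $\alpha_1\neq 0$ (e.g.\ $f-\alpha_1=(x-\alpha_2)\cdot(\text{square})$, and so on). The paper closes this off with a genuinely different input: it applies the $abc$ theorem for function fields to the triple $(f,-(f-B),B)$ to show $g$ and $k$ have simple, pairwise distinct roots, then counts branch points of the degree-$d$ covering $f:\vP^1\to\vP^1$ to conclude that all finite ramification is exhausted over $0$ and $B$, so $f-\beta$ is squarefree for every finite $\beta\notin\{0,B\}$. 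Only then does the tree argument (periodicity of $0$, Lemma~\ref{lemma:onlyone}, and the existence of a non-repeating subtree above a root of $f-\alpha_1$) force $\alpha_1=0$. Without this ramification count your argument does not go through.

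A secondary error: your claim that the case $N=2$ is ``easily ruled out'' is false, and in fact contradicts the conclusion you are trying to prove. For the polynomials in case (2) one has $f-B=Cxh^2$, so the only odd-multiplicity irreducible factor of $f^{1}-B$ is $x$, which divides $f^0=x$; that is, $N=2$ is exactly what occurs. Your coprimality argument $\gcd(f,f-B)=1$ only shows the odd-multiplicity factor of $f-B$ does not divide $f^1$, not that it fails to divide $f^0$. The converse direction and the initial condition $a_0=\pm\sqrt{-1/A}$ are fine and agree with the paper.
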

\begin{proof}

Again, we reason over the algebraic closure of $K$, as the problem is completely equivalent.
Suppose that $f$ is not dynamically $2$-ordinary. Since $f$ is clearly not dynamically $2$-ordinary when condition (1) holds, we can assume without any loss that $f$ is not of the form $Axg^2$. Using Lemma \ref{lemma:special_form_str} we recall that $f$ has at most one root $B$ with odd multiplicity.

If $T_B(f)$ were a non-repeating subtree, we can argue similarly to the proof of Lemma~\ref{lemma:special_form_str} to conclude that $f$ is dynamically $2$-ordinary. Hence, $T_B(f)$ is repeating, $0$ lies in $T_B(f)$, and $0$ is a periodic point of $f$ by Lemma \ref{lemma:onlyone}.

Let $\alpha_1$ be a root of $f-B$ appearing with odd multiplicity. Then $f-B=A (x-\alpha_1)k^2$ because otherwise there would be at least one root $\alpha$ of $f-B$ with odd multiplicity such that $T_{\alpha}(f)$ is non-repeating, from which we can again conclude that $f$ is dynamically $2$-ordinary.
Let us now show that $\alpha_1=0$, so that the full orbit of $0$ is $\{0,B\}$. To do this, we need to observe some facts about ramification of the covering of $\vP^1$ induced by $f$.

From the first paragraph, we have $f=A(x-B)g^2$ for some $A,B\in K$. Note that $g$ and $k$ cannot have a common root (unless $B$ is zero, which was excluded earlier). Now, if $g$ or $k$ has a root with multiplicity larger than or equal to $2$, using $abc$ theorem for function fields on the triple $(f, -f-B, B)$ gives a contradiction:
\[\deg(f)\leq ((d-1)/2+1)+((d-1)/2-1+1)-1 = d-1.\] 
So $g$ and $k$ have all distinct roots and no common root.

Now consider the covering given by the map $f$ on $\mathbb P^1$: Looking at all roots of $k$ and $g$, we observe that it has at least $d-1$ branch points over ramified image values (because these are in correspondence with the zeroes of $f'$), so we exhausted ramification at finite points (because $g,h$ are coprime) and for all other finite $\beta$ we must have $f-\beta$ unramified. This means that there cannot be an $\alpha_2$, different from $B$ and $\alpha_1$, for which $f-\alpha_1=(x-\alpha_2)g^2$ (again, $g$ cannot have roots in common with $h$ or $k$, or $f$ is not a well defined map). 

Now we are ready to prove that $\alpha_1=0$. Suppose the contrary.
Recall that $B$ is at the first level, and $\alpha_1$ is at the second level of the tree (since the image of $\alpha_1$ is $B$). Since we exhausted ramification with the preimages of $0$ and $B$, we are sure that $f-\alpha_1$ has no repeated root. We can now see that the zero can only lie in one subtree rooted at one of roots of $f-\alpha_1$, because otherwise if it is in two of them then $f-\alpha_1$ would have repeated roots: Since $0$ is periodic and $\alpha_1$ is in the orbit of zero (because by assumption $T_{\alpha_1}(f)$ is repeating, hence $0\in T_{\alpha_1}(f)$), the appearence of $0$ in two distinct subtrees (each rooted at a root of $f-\alpha_1$) at distance $t$ from $\alpha_1$ for some $t\geq 1$ would guarantee that $f^{t-1}(0)$ is a double root for $f-\alpha_1$. By Lemma~\ref{lemma:onlyone}, then, at most one of the subtrees rooted at roots of $f-\alpha_1$ would be repeating. Let $\theta$ be a root of $f-\alpha_1$ such that $T_{\theta}(f)$ is non-repeating. Note that $B$ is a root of $f$ with odd multiplicity that does not appear earlier in the tree, and $\alpha_1$ is also such a root of $f^2$. For $n\geq 3$, considering the roots of $f^{(n-2)}-\alpha_1$ (which are clearly roots of $f^n$) that lie in $T_{\theta}(f)$ would lead to roots of $f^n$ with multiplicity one which do not appear at any previous level. Therefore, $f$ is a $2$-ordinary polynomial, a contradiction. We conclude $\alpha_1=0$.


From the facts that $\alpha_1=0$ and $0$ is periodic, we conclude that $f(0)=B$, which leads to the following equations for $f$:
\begin{equation}
\label{eq:f = A(x-B)g^2}
f = A(x-B)g^2
\end{equation}
and
\begin{equation}
\label{eq:f-B=Cxh^2}
f-B=Cxh^2
\end{equation}
for some $A, B, C\in K$, $g,h\in K[x]$. Taking the derivatives of both equations and simplifying leads to
\begin{equation}
\label{eq:Ag(g+2(x-B)g') = Ch(h+2xh')}
Ag(g+2(x-B)g') = Ch(h+2xh').
\end{equation}
Noticing that $g$ and $h$ cannot have common factors, and that $\text{deg}(g) = \text{deg}(h)$, and (hence) 
\[\text{deg}(g+2(x-B)g') = \text{deg}(h+2xh')\] by (\ref{eq:Ag(g+2(x-B)g') = Ch(h+2xh')}), we obtain
\begin{equation}
\label{eq:g=D(h+2xh'), h = E(g+2(x-a)g')}
g=D(h+2xh'), h = E(g+2(x-B)g')
\end{equation}
 for some $D,E\in K$. Set $n=\frac{d-1}{2}$. Using (\ref{eq:f = A(x-B)g^2}) and (\ref{eq:f-B=Cxh^2}), we can assume without any loss that $\frac{\ell_g}{\ell_h} = \sqrt{\frac{C}{A}}$. Using this in (\ref{eq:Ag(g+2(x-B)g') = Ch(h+2xh')}), direct computation leads to
\begin{equation}
	\label{eq:comparisonOfConstants}
	D=\frac{1}{2n+1}\sqrt{\frac{C}{A}}, E=\frac{1}{2n+1}\sqrt{\frac{A}{C}}.
\end{equation}
 Using these expressions for $D$ and $E$ in (\ref{eq:g=D(h+2xh'), h = E(g+2(x-a)g')}), we obtain
 \begin{equation}
 \label{eq:g in terms of h}
 (2n+1)g = \sqrt{\frac{C}{A}} (h+2xh')
 \end{equation}
 and
 \begin{equation}
 \label{eq:h in terms of g}
 (2n+1)h = \sqrt{\frac{A}{C}} (g+2(x-B)g').
 \end{equation}
 Using (\ref{eq:h in terms of g}) in (\ref{eq:g in terms of h}), and simplifying yields
 \begin{equation}
 \label{eq:g-g'-g''}
((2n+1)^2-1)g = (8x-2B)g' + (4x^2-4Bx)g'',
 \end{equation}
 which is a second order, linear differential equation. Let $g = \sum_{i=0}^{n} a_ix^i$. Comparing the coefficients of both sides in (\ref{eq:g-g'-g''}) and simplifying leads to
 \begin{equation}
 \label{eq:recursive-formula}
i(2i-1)Ba_i = -2(n-i+1)(n+i)a_{i-1}
 \end{equation}
 for $i=1,2,\dots, n$. Finally, recalling that $f(0) = B$, and using this in (\ref{eq:f = A(x-B)g^2}) gives $g(0) = a_0 =\pm \sqrt{-\frac{1}{A}}$, completing the proof of \emph{only if} part of the theorem.\par 
Conversely, by the first part of the proof, conditions (1) and (2) clearly imply that $f$ is not dynamically $2$-ordinary, finishing the proof of Theorem~\ref{thm:strong_odd}.
\end{proof}
\begin{proof}[Proof of Theorem~\ref{thm:2-ordinary}]
	This now immediately follows from Corollary~\ref{cor:finite_case}, Theorem~\ref{thm:strong_even}, and Theorem~\ref{thm:strong_odd}.
	
\end{proof}
Next, we will show that the polynomials arising in Theorem~\ref{thm:strong_odd} (2) are $K$-conjugate to $-T_d$ when $\text{char}(K)=0$ or $\text{char}(K)\geq d$. We will first state a lemma, which will be crucial in the proof. From now on, we will use another normalization of Chebyshev polynomials, given by $\tilde{T}_d(x) = \frac{1}{2}T_d(2x)$. By following the notation in \cite{grubb14}, let $\psi_1(x) = x-2$, $\psi_2(x) = x+2$, and
\[\psi_n(x) = \prod_{\substack{\gcd(k,n)=1 \\ 0<k<\frac{n}{2}}} \bigg(x-2\cos (\frac{2\pi k}{n})\bigg)\]
for $n>2$. It is easy to see that $\psi_n$ has integer coefficients for all $n$ (\cite[Theorem 2.1]{grubb14}). Therefore, the polynomials $\psi_n$ can be taken over any field.
 \begin{lemma}[\cite{grubb14}]
\label{lem:chebyshev}
Let $d\geq 1$ be odd. Then we have
\begin{enumerate}
	\item $\tilde{T}_d(x)-2 = \psi_1 (x)\bigg(\prod_{\substack{k|d\\ k\neq 1}} \psi_k(x)\bigg)^2$.
	\item  $\tilde{T}_d(x)+2 = \psi_2(x)\bigg(\prod_{\substack{k|d\\ k\neq 1}} \psi_{2k}(x)\bigg)^2$.
\end{enumerate}
\end{lemma}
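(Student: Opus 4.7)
The plan is to pass to the trigonometric parametrization $x = 2\cos\theta$, under which $\tilde T_d(2\cos\theta) = 2\cos(d\theta)$. Applying the half-angle identities gives
\[\tilde T_d(x) - 2 = -4\sin^2(d\theta/2), \qquad \tilde T_d(x) + 2 = 4\cos^2(d\theta/2).\]
Both $\tilde T_d(x) - 2$ and $\tilde T_d(x) + 2$ are monic polynomials of degree $d$ in $x$, so the identities in the lemma reduce to locating all roots with their correct multiplicity and checking leading coefficients.

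For part (1), the zeros of $\sin(d\theta/2)$ in $[0, \pi]$ are $\theta_m = 2\pi m/d$ for $0 \le m \le (d-1)/2$ (using that $d$ is odd). First I would check that $x = 2$ (from $m = 0$) is a simple root via the local expansion $\tilde T_d(x) - 2 \sim d^2(x - 2)$ near $x = 2$. For $1 \le m \le (d-1)/2$ the points $\theta_m$ lie in the open interval $(0, \pi)$, where $dx/d\theta = -2\sin\theta \ne 0$, so each double zero of $\sin^2(d\theta/2)$ in $\theta$ transfers to a double root of $\tilde T_d(x) - 2$ at $x = 2\cos(2\pi m/d)$. This yields
\[\tilde T_d(x) - 2 = (x - 2)\prod_{m=1}^{(d-1)/2}\bigl(x - 2\cos(2\pi m/d)\bigr)^2.\]
Part (2) is treated in exactly the same way: $\cos(d\theta/2) = 0$ forces $\theta = (2m+1)\pi/d$ for $0 \le m \le (d-1)/2$, with $m = (d-1)/2$ yielding the simple root $x = -2$ and the remaining values giving double roots.

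It then remains to repackage the resulting products into the polynomials $\psi_n$. For part (1), as $m$ ranges over $1, \ldots, (d-1)/2$, the fraction $m/d$ in lowest terms is uniquely $k/n$ with $n \mid d$, $n > 1$, $\gcd(k, n) = 1$, and $0 < k < n/2$, which is exactly the root set of $\psi_n$. For part (2), I would write $(2m+1)/(2d)$ in lowest terms: since $2m+1$ is odd, $\gcd(2m+1, 2d) = \gcd(2m+1, d) =: g$ is an odd divisor of $d$, so the reduced denominator is $2k$ with $k = d/g$ dividing $d$; the case $k = 1$ recovers $x = -2$ (the root of $\psi_2$), while $k > 1$ gives the roots of $\psi_{2k}$.

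The main obstacle I expect is the combinatorial bookkeeping in this last step, i.e.\ verifying that the partition of the $(d-1)/2$ double roots by reduced denominator exactly matches the index sets $\{n : n \mid d, n \ne 1\}$ and $\{2k : k \mid d, k \ne 1\}$ with the correct multiplicity (each root appearing once). Everything else reduces to elementary trigonometric identities and root-multiplicity arguments; a direct sanity check at $d = 3$ gives $\tilde T_3(x) - 2 = (x-2)(x+1)^2 = \psi_1 \psi_3^2$ and $\tilde T_3(x) + 2 = (x+2)(x-1)^2 = \psi_2 \psi_6^2$, which aligns with the lemma.
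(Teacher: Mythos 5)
Your argument is correct, but note that the paper offers no proof of this lemma at all: it is quoted verbatim from Grubb's \emph{Factoring Chebyshev Polynomials} (the cited reference), so there is nothing internal to compare against. Your trigonometric route (parametrize $x=2\cos\theta$, reduce $\tilde T_d(x)\mp 2$ to $-4\sin^2(d\theta/2)$ and $4\cos^2(d\theta/2)$, track multiplicities through the critical points $\theta=0,\pi$ of $2\cos\theta$, and then sort the angles $2\pi m/d$ and $(2m+1)\pi/d$ by reduced denominator) is sound, and the degree count $1+2\sum_{k\mid d,\,k\neq 1}\phi(k)/2=d$ together with monicity closes the argument; since both sides are integer polynomials, verifying the identity over $\CC$ suffices for the ``over any field'' claim the paper needs. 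The standard (and Grubb's) route is algebraically cleaner: substitute $x=z+z^{-1}$, so that $\tilde T_d(x)-2=z^{-d}(z^d-1)^2$ and $\tilde T_d(x)+2=z^{-d}(z^d+1)^2$, then factor $z^d\mp1$ into cyclotomic polynomials and use $\psi_n(z+z^{-1})=z^{-\phi(n)/2}\Phi_n(z)$; this makes your ``combinatorial bookkeeping'' step automatic, since the partition of roots by reduced denominator is exactly the factorization of $z^d-1$ into $\prod_{k\mid d}\Phi_k$. One caution: the identity holds for the normalization $\tilde T_d(2\cos\theta)=2\cos(d\theta)$, i.e.\ $\tilde T_d(x)=2T_d(x/2)$ in the classical convention, which is what you (correctly) use; the paper's displayed formula $\tilde T_d(x)=\frac12T_d(2x)$ does not produce a monic polynomial and appears to be a typo, so it is worth flagging rather than silently adopting.
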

\begin{prop}
	\label{prop:PCF2}
	Let $K$ be a field such that either $\text{char}(K)=0$ or $\text{char}(K)\geq d$ and $\text{char}(K)$ is odd. Suppose that $f\in K[x]$ is defined as follows: $f=A(x-B)g^2$, where $A, B\in \vF_q$, and $g=\sum_{i=0}^{n}a_ix^i$ satisfies
	\[\pm a_0 = \sqrt{-\frac{1}{A}}, \text{ }i(2i-1)Ba_i = -2(n-i+1)(n+i)a_{i-1}\]
	for $i=1,2,\dots, n$. Then $f$ is $K$-conjugate to $-T_d(x)$.
\end{prop}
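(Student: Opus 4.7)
The proof will follow the template of Proposition~\ref{prop:PCF1}: first normalize $f$ by an affine conjugation so that it no longer depends on $A$ or $B$, then identify an explicit Chebyshev-related polynomial of the same normalized form, and finally use uniqueness of polynomial solutions of a certain ODE to match the two.

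\emph{Normalization.} Setting $b_i := (a_i/a_0) B^i$, the given recursion turns into $b_i = -\tfrac{2(n-i+1)(n+i)}{i(2i-1)} b_{i-1}$, which no longer involves $A$ or $B$, with $b_0 = 1$. Letting $G(y) := \sum_{i=0}^n b_i y^i$ and using $Aa_0^2 = -1$, one gets $f(x) = (B-x)G(x/B)^2$. The affine conjugation $\gamma(x) = x/B$ then brings $f$ to $\tilde f(y) := (1-y)G(y)^2$, a polynomial independent of both $A$ and $B$. Substituting $g(x) = a_0 G(x/B)$ in \eqref{eq:g-g'-g''} yields the ODE
\[(\star) \qquad (d^2-1)G(y) = (8y-2)G'(y) + 4y(y-1)G''(y),\]
whose polynomial solutions form a one-dimensional $K$-space by the recursion above (valid under our hypothesis on $\mathrm{char}(K)$).

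\emph{Chebyshev-related solution.} By Lemma~\ref{lem:chebyshev}(2), together with the relation between $\tilde T_d$ and $T_d$, for $d$ odd one has $T_d(z) + 1 = (z+1)Q(z)^2$ for some $Q \in K[z]$ of degree $n$, and symmetrically $T_d(z) - 1 = (z-1)P(z)^2$ by Lemma~\ref{lem:chebyshev}(1). The classical identity $(T_d'(z))^2(1-z^2) = d^2(1 - T_d(z)^2)$ combined with these factorizations gives $T_d' = \pm dPQ$, and comparing with $T_d' = Q(Q + 2(z+1)Q')$ (obtained by differentiating $T_d + 1 = (z+1)Q^2$) yields the first-order relation $Q + 2(z+1)Q' = \pm dP$; symmetrically $P + 2(z-1)P' = \pm dQ$. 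Differentiating the first relation and substituting into the second produces a second-order linear ODE for $Q$ which, under the change of variable $z = 1-2y$ in $\tilde G(y) := Q(1-2y)$, is precisely $(\star)$. Hence $\tilde G$ is a polynomial solution of $(\star)$.

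\emph{Uniqueness and final conjugacy.} Since polynomial solutions of $(\star)$ are one-dimensional, $\tilde G = cG$ for some $c \in K$. Comparing constant terms, $c = \tilde G(0) = Q(1)$; from $T_d(1)+1 = 2Q(1)^2$ one gets $Q(1)^2 = 1$, so $c = \pm 1$ and $\tilde G^2 = G^2$. Therefore
\[\tilde f(y) = (1-y)G(y)^2 = (1-y)Q(1-2y)^2 = \tfrac{1}{2}\bigl(1 + T_d(1-2y)\bigr),\]
the last equality coming from $T_d(1-2y) + 1 = 2(1-y)Q(1-2y)^2$. Because $T_d$ is an odd function for $d$ odd, the affine map $\mu(y) := (1+y)/2$ satisfies $\mu \circ (-T_d) \circ \mu^{-1}(y) = \mu\bigl(T_d(1-2y)\bigr) = \tilde f(y)$. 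Consequently $\tilde f$, and therefore $f$, is $K$-conjugate to $-T_d(x)$.

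I expect the main obstacle to be the middle step, specifically the derivation of the second-order ODE for $Q$ from the two first-order Chebyshev relations; once that ODE is in hand, uniqueness of polynomial solutions of $(\star)$ finishes the argument almost immediately, just as in Proposition~\ref{prop:PCF1}.
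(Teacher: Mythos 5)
Your proof is correct and follows essentially the same route as the paper's: normalize $f$ by an affine conjugation to eliminate the dependence on $A$ and $B$, use Lemma~\ref{lem:chebyshev} to exhibit an affine conjugate of $-T_d$ satisfying the same second-order ODE \eqref{eq:g-g'-g''}, and conclude by uniqueness of the degree-$n$ polynomial solution together with a comparison of constant terms. The only cosmetic differences are your choice of normalization ($B=1$ with the substitution $z=1-2y$, versus the paper's $B=-4$ with $x\mapsto -(x+2)$) and your self-contained derivation of the first-order relations for $Q$ via the Pell identity $(1-z^2)(T_d'(z))^2=d^2(1-T_d(z)^2)$, where the paper instead invokes the two factorization identities already established in the proof of Theorem~\ref{thm:strong_odd}.
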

\begin{proof}
Let $b_i:= \frac{a_i}{a_0}B^i\sqrt{-1}$. Recalling $a_0=\pm\sqrt{-\frac{1}{A}}$, this immediately yields
\[f = B(\frac{x}{B}-1)\bigg(\sum_{i=0}^{n} b_i\big(\frac{x}{B}\big)^i\bigg)^2,\]
which we observe being independent of $A$, so we can select $A=-1$ without changing $f$. Since we only consider the question up to $K$-conjugacy, and $f$ is $K$-conjugate to $h:=(x-1)(\sum_{i=0}^{n} b_ix^i)^2$ for any choice of $B$ (by $\gamma(x)=Bx\in \text{PGL}(2,K)$), we conclude that $B$ can be freely chosen and this does not affect the conjugacy class of $f$.

Now, observe that replacing $x$ with $-(x+2)$ in Lemma~\ref{lem:chebyshev} (1) and Lemma~\ref{lem:chebyshev} (2), we obtain
\begin{equation}
\label{eq:cheb1}
\tilde{T}_n(-(x+2))-2 = -(x+4)(g_n(-(x+2)))^2
\end{equation} 
and
\begin{equation}
\label{eq:cheb2}
\tilde{T}_n(-(x+2))+2 = -x(g_n(-(x+2)))^2.
\end{equation}
Letting $F_n=\tilde{T}_n(-(x+2))-2$, $G_n=g_n(-(x+2))$, $A=-1$, $B=-4$, $C=-1$, Equations (\ref{eq:cheb1}) and (\ref{eq:cheb2}) yield
\[F_n=A(x-B)G_n^2\]
and
\[F_n-B = CxG_n^2.\]
Therefore, by (\ref{eq:f = A(x-B)g^2}) and (\ref{eq:f-B=Cxh^2}), we conclude that $F_n$, the $K$-conjugate of $\tilde{T}_n(-x)=-\tilde{T}_n(x)$ by $\gamma=x+2\in \text{PGL}(2,K)$, is a solution to the differential equation in (\ref{eq:g-g'-g''}) for the specialization $B=-4$. By the form of the solutions of the differential equation and the assumption on $\text{char}(K)$, it is clear that the polynomial solution is unique up to a constant multiple. Hence, we conclude $F_n = Df$ for some $D\in K$. Now, the constant coefficient of $f$ is -4 by the values of $a_0, A$ and $B$. Moreover, the constant coefficient of $F_n$ is also $-4$ by Lemma~\ref{lem:chebyshev} (2). Hence, $D=1$, and $F_n=f$. We conclude that $f$ is $K$-conjugate to $-T_d(x)$, as desired.

\end{proof}

\section{Square patterns in orbits}
\label{sec:OrbitSize}
In this section, we will give an application of our earlier results to a problem related to squares in orbits of polynomials over finite fields. We start by recalling the Weil's bound on multiplicative character sums, which will play a fundamental role in our proofs.
\begin{thm}[Weil Bound on Character Sums]
	Let $\chi$ be the quadratic character on $\vF_q$ defined by 
	$\chi(a)=\left(\frac{a}{q}\right)$ for any $a\in \vF_q$. For $f\in \mathbb{F}_q[x]$, set $d=\text{deg}(f)$. If $f$ is not a perfect square in $\vF_q[x]$, then
	\[
	|\sum_{x\in \mathbb{F}_q} \chi(f(x))| \leq (d-1)\sqrt{q}.
	\]
	
\end{thm}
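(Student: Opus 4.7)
The plan is to interpret the character sum geometrically as counting $\vF_q$-points on a hyperelliptic-type curve, and then invoke the Hasse--Weil bound on the number of points of a smooth projective curve over a finite field.

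First, I would use the standard identity that for $a\in\vF_q$, the number of $y\in\vF_q$ with $y^2=a$ equals $1+\chi(a)$ (with the convention $\chi(0)=0$). Applying this with $a=f(x)$ and summing over $x\in\vF_q$, the number of affine $\vF_q$-points on the curve $C:y^2=f(x)$ equals $q+\sum_{x\in\vF_q}\chi(f(x))$. Thus the character sum is $|C(\vF_q)_{\mathrm{aff}}|-q$, and the theorem becomes equivalent to a point-counting bound for $C$.

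Second, I would reduce to the squarefree case. Write $f=f_0g^2$ with $f_0\in\vF_q[x]$ squarefree; since $f$ is not a perfect square, $\deg(f_0)\geq 1$. For $x$ with $g(x)\neq 0$ we have $\chi(f(x))=\chi(f_0(x))$, while the terms with $g(x)=0$ contribute in absolute value at most $\deg(g)$. Hence it suffices to bound $\sum_x\chi(f_0(x))$ with $f_0$ squarefree of degree $d_0\leq d$, and then package the correction of size $O(d)$ into the final bound.

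Third, for $f_0$ squarefree I would pass to the smooth projective model $\widetilde C$ of the hyperelliptic-type curve $y^2=f_0(x)$, which has genus $g=\lfloor(d_0-1)/2\rfloor$. The Hasse--Weil bound (the Riemann Hypothesis for curves over finite fields, due to Weil) gives $\bigl||\widetilde C(\vF_q)|-(q+1)\bigr|\leq 2g\sqrt{q}$. Translating back to affine points by subtracting the contribution from the one or two points at infinity, and combining with the earlier identity, yields $|\sum_x\chi(f_0(x))|\leq (d_0-1)\sqrt{q}+O(1)$, and ultimately the stated bound $(d-1)\sqrt{q}$. The main obstacle is the Hasse--Weil input itself, which is a deep theorem; the rest of the argument is bookkeeping about singular points, points at infinity, and the reduction to the squarefree case. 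In practice I would simply cite Weil's theorem (or its packaging in Schmidt's book on equations over finite fields) and execute the two elementary reduction steps above.
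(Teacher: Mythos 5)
The paper does not actually prove this statement: it is recalled as a classical theorem of Weil and used as a black box, so there is no internal proof to compare against. Your sketch is the standard derivation from the Riemann Hypothesis for curves and is sound in outline, but two points deserve attention.

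First, the hypothesis as stated --- and as you use it in your reduction, where you assert that $\deg(f_0)\geq 1$ --- is too weak. If $f=cg^2$ with $c$ a non-square constant, then $f$ is not a perfect square in $\vF_q[x]$, yet $\chi(f(x))=\chi(c)=-1$ away from the zeros of $g$, so the sum has absolute value roughly $q$ (e.g.\ $f=2x^2$ over $\vF_5$ gives sum $-4$, exceeding $\sqrt{5}$). The correct hypothesis is that $f$ is not a constant times a square, equivalently that $f$ has an irreducible factor of odd multiplicity; this is exactly what the paper guarantees where it applies the bound (via dynamical $2$-ordinarity), and under it your squarefree part $f_0$ genuinely has positive degree. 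Second, raw point counting on the smooth projective model of $y^2=f_0(x)$ gives $\bigl|\sum_{x}\chi(f_0(x))\bigr|\leq (d_0-1)\sqrt{q}+O(1)$ because of the points at infinity, and your additional correction of size $\deg(g)$ from the zeros of $g$ has nothing to be absorbed into when $f$ is already squarefree; the clean constant $(d-1)\sqrt{q}$ comes from the $L$-function formulation (as in Schmidt's book, which you mention citing). Neither issue affects the paper, which only needs the bound up to a $d$-dependent constant, but both should be fixed if you intend the argument to establish the theorem exactly as stated.
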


\begin{notation}
For any $a\in \mathbb{F}_q$ and $f\in \mathbb{F}_q[x]$, we let $s_a$ be the sequence $\{\chi(f^{\ell}(a))\}_{\ell\geq 1}$. We also denote by $s_a(\ell)$ the $\ell$-th element of this sequence.
\end{notation}
We are now ready to prove our first main result.
\begin{thm}
\label{thm:asym_orbit}
	Let $q$ be an odd prime power and $f\in \vF_q[x]$ be a dynamically $2$-ordinary polynomial of degree $d$. Let $a\in \mathbb{F}_q$. Suppose that the sequence $s_a$ is periodic, and let $m:=m_a$ be its period. Then 
	\[|\mathcal{O}_f(a)|=O\left(mq^\frac{2\log_2(d)+1}{2\log_2(d)+2}\right),\]
	where the implied constant is only dependent on $d$.
\end{thm}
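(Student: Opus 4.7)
The plan is to combine the dynamically $2$-ordinary hypothesis with Weil's bound on character sums through a pigeonhole argument on \emph{character vectors}. Informally, the periodicity of $s_a$ with small period $m$ forces the orbit to fit inside a very small set of local character patterns, and Weil's bound will say that no single such pattern can be attained too often in $\mathbb{F}_q$. The dynamically $2$-ordinary hypothesis is precisely the leverage that guarantees the relevant character-sum polynomials $\prod_{i\in S} f^i$ are never perfect squares, so that Weil applies for every nonempty index set $S$.

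For a parameter $k$ to be optimized at the end, I would introduce the map
\[ \Phi_k:\mathbb{F}_q \to \{-1,0,1\}^k,\qquad \Phi_k(x) = \bigl(\chi(f(x)),\chi(f^2(x)),\ldots,\chi(f^k(x))\bigr). \]
Writing $N := |\mathcal{O}_f(a)|$, the set $\mathcal{O}_f(a) = \{a, f(a),\ldots,f^{N-1}(a)\}$ consists of $N$ distinct elements of $\mathbb{F}_q$, and $\Phi_k(f^j(a)) = (s_a(j+1),\ldots,s_a(j+k))$ depends only on $j \bmod m$ by periodicity. Hence $|\Phi_k(\mathcal{O}_f(a))|\leq m$, and pigeonhole produces a vector $\vec{w}$ with $N/m \leq M_{\vec{w}} := \#\{x\in\mathbb{F}_q : \Phi_k(x)=\vec{w}\}$. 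Vectors $\vec{w}$ having a zero coordinate force $f^i(x)=0$ for some $i\leq k$ and so contribute only $O(d^k)$ to $M_{\vec{w}}$; I may therefore assume $\vec{w}\in\{\pm 1\}^k$. Using $\mathbf{1}_{\chi(y)=w_i} = (1+w_i\chi(y))/2$ for $y\neq 0$ and expanding the product,
\[ M_{\vec{w}} = \frac{q}{2^k} + \frac{1}{2^k}\sum_{\emptyset \ne S\subseteq\{1,\ldots,k\}} \Bigl(\prod_{i\in S}w_i\Bigr)\sum_{x\in\mathbb{F}_q}\chi\Bigl(\prod_{i\in S}f^i(x)\Bigr) + O(d^k). \]
For each nonempty $S$, setting $j := \max S$, the dynamically $2$-ordinary hypothesis supplies an irreducible factor $g_j\mid f^j$ of odd multiplicity in $f^j$ that divides none of $f^i$ for $i<j$; thus $g_j$ has odd multiplicity in $\prod_{i\in S}f^i$, so this product is not a square in $\mathbb{F}_q[x]$, and Weil's bound yields
\[ \Bigl|\sum_{x\in\mathbb{F}_q}\chi\Bigl(\prod_{i\in S}f^i(x)\Bigr)\Bigr| \leq \Bigl(\sum_{i\in S}d^i - 1\Bigr)\sqrt{q} \leq 2d^k\sqrt{q}. \]

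Summing over the $2^k-1$ nonempty subsets and normalizing by $2^k$ gives $M_{\vec{w}} \leq q/2^k + 2 d^k\sqrt{q} + O(d^k)$. Balancing the two main terms $q/2^k$ and $d^k\sqrt{q}$ motivates the choice $k \sim (\log_2 q)/(2\log_2(2d))$, and a direct computation then yields $M_{\vec{w}} = O\bigl(q^{(2\log_2 d + 1)/(2\log_2 d + 2)}\bigr)$ with implied constant depending only on $d$. Combined with the pigeonhole inequality $N\leq m\, M_{\vec{w}}$, this produces the claimed bound. The main technical obstacle I anticipate is bookkeeping: tracking the $O(d^k)$ error from zero-character contributions, verifying that summing the $2^k-1$ Weil terms does not inflate the constant, and checking that the resulting implicit constant depends on $d$ alone (not on $q$ or $m$). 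Conceptually, however, the engine is clean: dynamically $2$-ordinary $\Rightarrow$ no square product polynomials $\Rightarrow$ Weil applies uniformly over all nonempty $S$ $\Rightarrow$ no character pattern is too popular.
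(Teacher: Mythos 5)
Your proposal is correct and runs on essentially the same engine as the paper's proof: expand products of quadratic-character indicators over a window of length $k$, use the dynamically $2$-ordinary hypothesis to guarantee each $\prod_{i\in S}f^i$ has an odd-multiplicity irreducible factor so Weil's bound applies, and balance $q/2^k$ against $d^k\sqrt{q}$ to get the exponent $\frac{2\log_2 d+1}{2\log_2 d+2}$. The only cosmetic difference is how the factor of $m$ enters (you pigeonhole on the at most $m$ character vectors realized along the orbit, while the paper sums the indicator products over all $m$ cyclic shifts of $s_a$) and how the zero-hitting points are discarded ($O(d^k)$ roots of the $f^i$ versus the paper's $2L+1$ points within distance $L$ of $0$ in the functional graph); both are equivalent bookkeeping.
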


\begin{remark}
	Theorem~\ref{thm:asym_orbit} implies that if the orbit is large, then the sequence of squares cannot obey a recurrence of low order. For example, if all elements in $\mathcal{O}_f(a)$  are squares (so $m=1$), and $f$ has degree $2$, then we have $|\mathcal{O}_f(a)|=O(q^\frac{3}{4}) $
	
	Now, suppose for example that $f$ is a permutation polynomial, and the orbit of an element is roughly $Cq$ for $0<C\leq 1$. Then, the period $m$ of the sequence $s_a$ must be at least $Dq^{\varepsilon}$. In other words, we cannot have a structured (in terms of the squareness of the elements of the orbit) large orbit.
\end{remark}


\begin{proof}[Proof of Theorem \ref{thm:asym_orbit}]

Let $L$ be a positive integer.
First, observe that $\mathcal{O}_f(a)$ is endowed with a natural graph structure given by the action of $f$. We will refer to this graph as the functional graph of $f$ starting at $a$. We want to count separately the elements of $\mathcal{O}_f(a)$ that are 
within distance $L$ from the $0$ in functional graph of $f$ starting at $a$ and all the other elements of $\mathcal{O}_f(a)$.
Denote by
\[\mathcal{O}_{f,0,L}(a)=\{x\in \mathcal{O}_f(a): \; f^i(x)=0\; \text{ for some }i\in \{0,\dots, L\}\}.\]
Clearly, $\mathcal{O}_{f,0,L}(a)\leq 2L+1 $. Let $\mathcal{O}_{f,0,L}(a)^c$ be the complement of $\mathcal{O}_{f,0,L}(a)$ in $\mathcal{O}_f(a)$.
Let us denote by $s_a(\ell)$ the $\ell$-th element in the sequence $s_a$.
Our purpose is to provide an estimate for $|\mathcal{O}_f(a)|$ using a suitable character sum:
\[|\mathcal{O}_f(a)|\leq  2L+1 +\sum^{m-1}_{i=0}\sum_{x\in \mathcal{O}_{f,0,L}(a)^c} \prod^L_{\ell=1}{\left(\frac{1+s_a(\ell+i)\chi(f^\ell(x))}{2}\right)}.\]
Let us explain the upper bound above. First, $\mathcal{O}_f(a)$ is split into $\mathcal{O}_{f,0,L}(a)$ and its complement, which leads to the $2L+1$ summand. Now, we need to observe that the sum counts at least one for every element of $ \mathcal{O}_{f,0,L}(a)^c$: If $x \in \mathcal{O}_{f,0,L}(a)^c$, then $f^\ell(x)\neq 0$ for all $\ell \leq L$ and \[\frac{1+s_a(\ell+i)\chi(f^\ell(x))}{2}\] is $1$ for at least one $i\in \{0,\dots, m-1\}$ since the sign of  $s_a(\ell+i)$ will agree with the sign of $\chi(f^\ell(x))$ for at least one $i\leq m-1$, as the period of the sequence $s_a$ is $m$.

Now, since $\prod^L_{\ell=1}{\left(\frac{1+s_a(\ell+i)\chi(f^\ell(x))}{2}\right)}$ is positive for all $x\in \vF_q$, we can upper bound again by summing over all $x$'s:
\[|\mathcal{O}_f(a)|\leq  2L+1 +\sum^{m-1}_{i=0}\sum_{x\in \vF_q} \prod^L_{\ell=1}{\left(\frac{1+s_a(\ell+i)\chi(f^\ell(x))}{2}\right)}.\]

%
%
%
%
%
%
%
%
%

	Using ideas similar to \cite{heath2019irreducible,ostafe2010length}, we are going to bound 
	\[B_i:=\sum_{x\in \vF_q} \prod^L_{\ell=1}{\left(\frac{1+s_a(\ell+i)\chi(f^\ell(x))}{2}\right)}\]
	uniformly in $i$ with a certain $B$, from which it will follow that
	\begin{equation}
	\label{eq:OrbitBound}
	|\mathcal{O}_f(a)|\leq 2L+1+mB.
	\end{equation}

	Observe that 
	\[\prod^L_{\ell=1}{\left(\frac{1+s_a(\ell+i)\chi(f^\ell(x))}{2}\right)}=\frac{1}{2^L}\sum_{T\subseteq \{1,\dots, L\} }\prod_{t\in T} s_a(t+i)\chi(f^t(x)). \]
	Exchanging the sums in $x$ and $T$ yields
	\[B_i= \frac{q}{2^L}+\frac{1}{2^L}\sum_{\emptyset \subsetneq T\subseteq \{1,\dots, L\} }\sum_{x\in \vF_q }\prod_{t\in T} s_a(t+i)\chi(f^t(x)),\]
	which can be rewritten as
	\[B_i= \frac{q}{2^L}+\frac{1}{2^L} \sum_{\emptyset \subsetneq T\subseteq \{1,\dots, L\} }\left(\prod_{t\in T} s_a(t+i) \right)\sum_{x\in \vF_q }\prod_{t\in T}\chi(f^t(x)),\]
	where $\prod_{t\in T} s_a(t+i) \in\{-1,1,0\}$. Therefore,
	\[B_i\leq  \frac{q}{2^L}+\frac{1}{2^L} \sum_{\emptyset \subsetneq T\subseteq \{1,\dots, L\} }\left|\sum_{x\in \vF_q }\chi\left(\prod_{t\in T}f^t(x)\right)\right|.\]
	Note that $\prod_{t\in T}f^t(x)$ is never a square in $\mathbb{F}_q[x]$ because by assumption, for all $t\in T$, there is always a new irreducible factor dividing $f^t$ with odd multiplicity. Therefore, we can use Weil's bound to get
	\[B_i\leq  \frac{q}{2^L}+\frac{1}{2^L} \left(\frac{d^{L+1}-1}{d-1}\right)\sqrt{q}\sum_{\emptyset \subsetneq T\subseteq \{1,\dots, L\} }1=\frac{q}{2^L}+ \left(\frac{d^{L+1}-1}{d-1}\right)\sqrt{q}.\]
	
	It follows that
	\begin{equation}\label{eq:bounddepL}
	B_i\leq \frac{q}{2^L}+d^{L+1}\sqrt{q}.
	\end{equation}
	Note that the above inequality is true for all $L$, so we need to optimize $L$ so that the inequality becomes the strongest possible. For that, we need to fix $L$ such that 
	$\frac{q}{2^L}\sim d^{L+1}\sqrt{q}$
	as $q$ grows. Therefore, we need
	\[\lim_{q\rightarrow \infty} \frac{q}{2^L d^{L+1}\sqrt{q}}=\lim_{q\rightarrow \infty} \frac{\sqrt{q}}{2^{L+(L+1)\log_2(d)}}=\lim_{q\rightarrow \infty} \frac{\sqrt{q}}{2^{L(\log_2(d)+1)+\log_2(d)}}=1.\]
	A direct computation yields the choice $$L\sim\log_2(\sqrt{q}/d)/(\log_2(d)+1).$$ Therefore, by directly computing $q/2^L$ with this choice of $L$, we obtain
	\[B=O\left(q^\frac{2\log_2(d)+1}{2\log_2(d)+2}\right).\]
	Combining this and the choice of $L$ with \eqref{eq:OrbitBound} completes the proof.
	
\end{proof}

\begin{remark}
Notice that the constant in the $O$ can be made explicit by simply choosing $L$ to be the floor of $\log_2(\sqrt{q}/d)/(\log_2(d)+1)$ and replacing it in the inequality \eqref{eq:bounddepL}.
\end{remark}

\begin{proof}[Proof of Theorem \ref{thm:orbitsize_explicit}]
This is an immediate consequence of Corollary~\ref{cor:finite_case}, Theorem~\ref{thm:strong_even}, Theorem~\ref{thm:strong_odd}, Proposition~\ref{prop:PCF1}, Proposition~\ref{prop:PCF2}, and Theorem~\ref{thm:asym_orbit}.
\end{proof}


Using a similar method, we next prove an asymptotic result on the maximum length of consecutive squares in dynamical orbits over finite fields. 
\begin{thm}
\label{thm:consecutive_squares_implicit}
	Let $q$ be an odd prime power and $f\in \vF_q[x]$ be a dynamically $2$-ordinary polynomial of degree $d$. Let $a\in \mathbb{F}_q$.  Then the longest sequence of consecutive squares (or non-squares) in $\mathcal{O}_f(a)$ has length $O\left(q^\frac{2\log(d)+1}{2\log(d)+2}\right)$, where the implied constant is only dependent on $d$.
\end{thm}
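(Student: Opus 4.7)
The plan is to mirror the character sum argument in the proof of Theorem~\ref{thm:asym_orbit}, but to use a single unshifted character sum whose size encodes the length of the longest run. Let $L^{*}$ be the length of the longest run of consecutive squares in $\mathcal{O}_f(a)$, and let $y_0, y_1, \dots, y_{L^{*}-1}$ be such a run, so that $y_{j+1} = f(y_j)$ and $\chi(y_j) = +1$ for all $j$. I may assume these $y_j$ are pairwise distinct, since otherwise the orbit is a cycle of length at most $L^{*}$ and the bound follows directly from Theorem~\ref{thm:asym_orbit} applied with $m = 1$. Note also that none of the $y_j$ is zero, because $\chi(0) = 0$ would violate membership in the run.

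Fix an auxiliary parameter $L \leq L^{*}$ to be optimized. For each $j \in \{0, 1, \dots, L^{*} - L\}$, every value $f^\ell(y_j) = y_{j+\ell}$ with $0 \leq \ell \leq L - 1$ lies in the run and is therefore a nonzero square; hence each such $y_j$ makes the indicator $\prod_{\ell=0}^{L-1}\frac{1 + \chi(f^\ell(x))}{2}$ equal to $1$. Summing this indicator over all $x \in \mathbb{F}_q$ yields a character sum $S$ satisfying $S \geq L^{*} - L + 1$. For the upper bound, expanding the product and swapping sums gives
\[
S = \frac{q}{2^L} + \frac{1}{2^L}\sum_{\emptyset \neq T \subseteq \{0, 1, \dots, L-1\}}\sum_{x \in \mathbb{F}_q}\chi\Bigl(\prod_{t \in T} f^t(x)\Bigr).
\]
For any nonempty $T$ with $\max T \geq 1$, the dynamically $2$-ordinary hypothesis supplies an irreducible factor of $f^{\max T}$ of odd multiplicity that does not divide $f^t$ for $t < \max T$, so the product is not a perfect square; for $T = \{0\}$ the product is just $x$, also not a square. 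Weil's bound then gives $S \leq \frac{q}{2^L} + \frac{d^L - 1}{d-1}\sqrt{q}$.

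Combining the two bounds yields $L^{*} \leq L - 1 + \frac{q}{2^L} + \frac{d^L - 1}{d-1}\sqrt{q}$, and choosing $L \sim \log_2(\sqrt{q}/d)/(\log_2 d + 1)$ balances the two main terms to $O(q^{(2\log_2 d + 1)/(2\log_2 d + 2)})$, with implied constant depending only on $d$; the linear term $L$ is logarithmic and absorbed in the $O$. The case of consecutive non-squares is identical after replacing the indicator by $\prod_{\ell=0}^{L-1}\frac{1 - \chi(f^\ell(x))}{2}$, which after expansion produces the same absolute value bounds via Weil. The step I expect to be most delicate is the lower bound $S \geq L^{*} - L + 1$, which requires the positions $y_0, \dots, y_{L^{*} - L}$ to be distinct so that they contribute to $S$ independently; this is the only place where the discussion of whether the run lives inside a short cycle is needed, and it is disposed of by the dichotomy in the first paragraph.
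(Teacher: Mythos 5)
Your proposal is correct and runs on the same engine as the paper's proof: an indicator product $\prod_\ell \frac{1+\chi(f^\ell(x))}{2}$ expanded over subsets $T$, the dynamically $2$-ordinary hypothesis guaranteeing that $\prod_{t\in T}f^t$ is never a perfect square so that Weil's bound applies, and the same optimization of $L$. Your combinatorial bookkeeping is in fact a little cleaner than the paper's: by restricting to the first $L^{*}-L+1$ positions of the run you get the lower bound $S\geq L^{*}-L+1$ directly, and the observation that a run element cannot hit $0$ within the run makes the paper's set $Y_1$ (elements close to $0$ in the functional graph) unnecessary, whereas the paper works with $4S+1$ elements split into $Y_1$, $Y_2$ and a remainder. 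The one step that is wrong as written is your fallback in the non-distinct case: if the $y_j$ repeat, the cycle consists entirely of squares, but the full sequence $s_a$ is then only \emph{eventually} periodic, so Theorem~\ref{thm:asym_orbit} with $m=1$ does not apply (it requires $s_a$ to be genuinely periodic from the start). Fortunately this branch is vacuous: the paper's convention is that ``$n$ consecutive elements'' means a set of $n$ (hence distinct) elements of the form $f^i(a),\dots,f^{i+n-1}(a)$, so the distinctness of $y_0,\dots,y_{L^{*}-1}$ is automatic and no dichotomy is needed.
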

\begin{proof}
We can restrict to the case of squares, as the case of non-squares is completely analogous. 
Suppose that there is a set $Y$ of $4S+1$ consecutive squares in $\mathcal{O}_f(a)$, which means that there is an element $b\in \mathcal{O}_f(a)$ such that $f^i(b)$ is a square for every $i\in \{1,\dots, 4S+1\}$.
Consider two subsets $Y_1,Y_2$ of $Y$, which we now describe:  Let $Y_1$ be the set of elements of $Y$ which have distance at most $S$ from the zero in the functional graph of $f$ starting at $a$. 
Note that $Y_1=\emptyset$ if the zero does not lie in $\mathcal{O}_f(a)$, and in general, we have $|Y|\leq 2S+1$. Moreover, set 
\[Y_2=\{f^i(b): i\in \{3S+2,\dots, 4S+1\}\}\]
The set $Y_2$ consists of the only elements of $Y$ such that if one applies $f$ a number of times less than or equal to $S$, one could land outside $Y$ (these are essentially the ``last'' $S$ elements of $Y$).
Set $X=Y\setminus (Y_1\cup Y_2)$ and observe that $X$ has size at least $S$. Now, all elements $x\in X$ satisfy the following properties:
\begin{itemize}
\item $f^i(x)$ is  non-zero for all $i\in \{1,\dots, S\}$ because $x\not\in Y_1$
\item $f^i(x)$ is  a square for all $i\in \{1,\dots, S\}$ because it is not in $Y_2$ and therefore $f^i(x)$ must land in $Y$.
\end{itemize}

So we established that for some $c\in \mathcal O_f(a)$ there is a set $X'\subseteq X$ of $S$  elements \[\{f(c), f^2(c), \dots f^{S}(c)\}\] that are all squares, and such that $f^{i}(x)\neq 0 $ and $f^i(x)$ is a square for any $x\in X'$ for all $i\in \{1,\dots,S\}$. Now, choose $L<S$ and consider the set $T(L)$ of elements $x\in \vF_q$ such that $f^i(x)$ is a square for all $i\in \{1,\dots,L\}$, and $f^i(x)\neq 0$. Clearly $X'\subseteq T(L)$, so if we can provide an estimate for $T(L)$, we can just tune $L$ to minimize $|T(L)|$ and get the strongest possible bound on $S$ (and therefore on $4S+1$).
Observe now that 
\[|T(L)|=\sum_{x\in T(L)} \prod^L_{\ell=1}{\left(\frac{1+\chi(f^\ell(x))}{2}\right)}\]
because the internal product in the sum is counting one for every element in $T(L)$. By extending the summation index to $\vF_q$ and observing that we are only adding positive or zero values, we get 
\[|T(L)|\leq \sum_{x\in \vF_q} \prod^L_{\ell=1}{\left(\frac{1+\chi(f^\ell(x))}{2}\right)}.\]
Now argue exactly as in the estimate for $B_i$ in the proof of Theorem \ref{thm:asym_orbit} with the $s_a$ consisting of the constant sequence of ones and choose again $L\sim\log(\sqrt{q}/d)/(\log(d)+1)$, getting $S=O(q^\frac{2\log(d)+1}{2\log(d)+2})$ and concluding the proof.
It is worth noting that if the choice $L\sim\log(\sqrt{q}/d)/(\log(d)+1)$ is not possible, the claim is immediately true because then we would have $S=O(\log(\sqrt{q}/d)/(\log(d)+1))$, which would give a tighter bound.
For the case of non-squares, simply replace $\frac{1+\chi(f^\ell(x))}{2}$ with $\frac{1-\chi(f^\ell(x))}{2}$.
\end{proof}

\begin{proof}[Proof of Theorem \ref{thm:consecutive_squares_explicit}]
This immediately follows by combining Corollary~\ref{cor:finite_case}, Theorem~\ref{thm:strong_even}, Theorem~\ref{thm:strong_odd}, Proposition~\ref{prop:PCF1}, Proposition~\ref{prop:PCF2}, and Theorem~\ref{thm:consecutive_squares_implicit}.
\end{proof}

\end{document}